\tikzstyle{black dot}=[fill=black, draw=black, shape=circle, minimum size=3pt, inner sep=0pt]
\tikzstyle{black dot small}=[fill=black, draw=black, shape=circle, minimum size=3pt, inner sep=0pt]
\tikzstyle{big white circle}=[fill=white, draw=black, shape=circle, minimum width=0.75cm]
\tikzstyle{white dot big}=[fill=white, draw=black, shape=circle, inner sep=1pt]
\tikzstyle{white dot}=[fill=white, draw=black, shape=circle, minimum size=3pt, inner sep=0pt]
\tikzstyle{flat box}=[fill=white, draw=black, shape=rectangle, minimum width=2.5cm, minimum height=0.5cm]
\tikzstyle{square}=[fill=white, draw=black, shape=rectangle]
\tikzstyle{flat box 2}=[fill=white, draw=black, shape=rectangle, minimum height=0.5cm, minimum width=1.0cm]
\tikzstyle{over }=[front]
\tikzstyle{theta}=[fill=black, draw=black, shape=ellipse, minimum height=6pt, minimum width=6pt, inner sep=0pt]
\tikzstyle{thetabig}=[fill=black, draw=black, shape=ellipse, minimum width=1cm, minimum height=0.01cm]
\tikzstyle{thetainv}=[fill=white, draw=black, shape=ellipse, minimum height=6pt, minimum width=6pt, inner sep=0pt]
\tikzstyle{thetabinv}=[fill=white, draw=black, shape=ellipse, minimum width=1cm, minimum height=0.01cm]
\tikzstyle{black over}=[fill=white, draw=black, shape=circle]
\tikzstyle{mid arrow}=[-, postaction={on each segment={mid arrow}}]
\tikzstyle{end arrow}=[->]
\tikzstyle{red mid arrow}=[-, draw={rgb,255: red,214; green,42; black,51}, postaction={on each segment={mid arrow}}, line width=1.1pt]
\tikzstyle{reddots}=[-,dotted, draw={rgb,255: red,214; green,42; blue,51},line width=1pt]
\tikzstyle{blue}=[-, draw=black, line width=1.1pt]
\tikzstyle{blue mid arrow}=[-, draw={rgb,255: red,23; green,37; black,167}, postaction={on each segment={mid arrow}}, line width=1.1pt]
\tikzstyle{over}=[-, link]
\tikzstyle{mapsto}=[{|->}]
\tikzstyle{blue arrow}=[draw=blue, ->, line width=1.1pt]
\tikzstyle{blue}=[-, draw=blue, line width=1.1pt]
\tikzstyle{black over}=[-, link2, line width=1.1pt]
\tikzset{
  on each segment/.style={
    decorate,
    decoration={
      show path construction,
      moveto code={},
      lineto code={
        \path [#1]
        (\tikzinputsegmentfirst) -- (\tikzinputsegmentlast);
      },
      curveto code={
        \path [#1] (\tikzinputsegmentfirst)
        .. controls
        (\tikzinputsegmentsupporta) and (\tikzinputsegmentsupportb)
        ..
        (\tikzinputsegmentlast);
      },
      closepath code={
        \path [#1]
        (\tikzinputsegmentfirst) -- (\tikzinputsegmentlast);
      },
    },
  },
  mid arrow/.style={postaction={decorate,decoration={
        markings,
        mark=at position .5 with {\arrow[#1]{stealth}}
      }}},
}
\tikzset{%
  link/.style    = { white, double = blue, line width =2.0pt,
                     double distance = 1.1pt },
    link2/.style    = { white, double = black, line width = 1.8pt,
  	double distance = 0.4pt },
  channel/.style = { white, double = blue, line width = 0.8pt,
                     double distance = 0.6pt },
}
\DeclareRobustCommand{\em}{%
	\@nomath\em \if b\expandafter\@car\f@series\@nil
	\normalfont \else \slshape \fi}
\numberwithin{equation}{section}
\numberwithin{equation}{section}
\newtheoremstyle{style1}
{13pt}
{13pt}
{}
{}
{\normalfont\bfseries}
{.}
{.5em}
{}
\theoremstyle{style1}
\newtheorem{definition}{Definition}[section]
\newtheorem{remark}[definition]{Remark}
\newtheorem*{repd@theorem}{\repd@title}
\newcommand{\newrepdtheorem}[2]{%
	\newenvironment{repd#1}[1]{%
		\def\repd@title{#2 \ref{##1}}%
		\begin{repd@theorem}}%
		{\end{repd@theorem}}}
\newcommand{\catf}[1]{{\mathsf{#1}}}
\newtheoremstyle{style2}
{13pt}
{13pt}
{\slshape}
{}
{\normalfont\bfseries}
{.}
{.5em}
{}
\theoremstyle{style2}
\newtheorem*{rep@theorem}{\rep@title}
\newcommand{\newreptheorem}[2]{%
	\newenvironment{rep#1}[1]{%
		\def\rep@title{#2 \ref{##1}}%
		\begin{rep@theorem}}%
		{\end{rep@theorem}}}
\newtheorem{theorem}[definition]{Theorem}
\newtheorem{proposition}[definition]{Proposition}
\newtheorem{corollary}[definition]{Corollary}
\newcommand{\spaceplease}{\needspace{5\baselineskip}}
\newcommand{\Ca}{\mathcal{A}}
\newcommand{\Map}{\catf{Map}}
\newcommand{\Diff}{\catf{Diff}}
\newcommand{\ra}[1]{\xrightarrow{\ #1 \ }}
\newcommand{\colimsub}[1]{\underset{#1}{\operatorname{colim}}\,}
\newcommand{\Hbdy}{\catf{Hbdy}}
\newcommand{\cat}[1]{\mathcal{#1}}
\newcommand{\End}{\catf{End}}
\newcommand{\Hom}{\operatorname{Hom}}
\newcommand{\id}{\operatorname{id}}
\newcommand{\vect}{\catf{vect}}
\let\to\undefined
\newcommand{\to}{\longrightarrow}
\let\mapsto\undefined
\newcommand{\mapsto}{\longmapsto}
\newcommand{\Rexf}{\catf{Rex}^{\mathsf{f}}}
\newcommand{\Lexf}{\catf{Lex}^\mathsf{f}}
\newcommand{\Vect}{\catf{Vect}}
\newcommand{\ARGraphs}{\cat{A}\catf{-RibGraphs}}
\newcommand{\skA}{\catf{sk}_\cat{A}}
\newcommand{\SkA}{\catf{Sk}_\cat{A}}
\newcommand{\SkCatA}{\catf{SkCat}_\cat{A}}
\newcommand{\skcatA}{\catf{skcat}_\cat{A}}
\newcommand{\opp}{\text{opp}}
\let\colon\undefined\newcommand{\colon}{:}
	\newcommand{\Proj}{\catf{Proj}\,}
\DeclareMathSymbol{\Phiit}{\mathalpha}{letters}{"08} 
\DeclareMathSymbol{\Psiit}{\mathalpha}{letters}{"09}
\DeclareMathSymbol{\Sigmait}{\mathalpha}{letters}{"06}
\DeclareMathSymbol{\Xiit}{\mathalpha}{letters}{"04}
\DeclareMathSymbol{\Piit}{\mathalpha}{letters}{"05}\let\Pi\undefined\newcommand{\Pi}{\Piit}
\DeclareMathSymbol{\Gammait}{\mathalpha}{letters}{"00}
\DeclareMathSymbol{\Omegait}{\mathalpha}{letters}{"0A}\let\Omega\undefined\newcommand{\Omega}{\Omegait}
\DeclareMathSymbol{\Upsilonit}{\mathalpha}{letters}{"07}
\DeclareMathSymbol{\Thetait}{\mathalpha}{letters}{"02}
\DeclareMathSymbol{\Lambdait}{\mathalpha}{letters}{"03}\let\Lambda\undefined\newcommand{\Lambda}{\Lambdait}
\let\Phi\undefined\newcommand{\Phi}{\Phiit}
\let\Sigma\undefined\newcommand{\Sigma}{\Sigmait}
\let\Psi\undefined\newcommand{\Psi}{\Psiit}
\let\Gamma\undefined\newcommand{\Gamma}{\Gammait}
\newcommand{\nakar}{\catf{N}^\catf{r}}
\newcommand{\nakal}{\catf{N}^\catf{l}}
\newcommand{\Sk}{\catf{Sk}}
\newcommand{\SkAlg}{\catf{SkAlg}}
\newcommand{\PhiA}{\Phi_{\! \cat{A}}}
\newenvironment{pnum}{\begin{enumerate}[label=(\roman*)]}{\end{enumerate}}
\renewcommand\section{\@startsection {section}{1}{\z@}%
	{-3.5ex \@plus -1ex \@minus -.2ex}%
	{2.3ex \@plus.2ex}%
	{\normalfont\scshape\centering}}
\titleformat{\subsection}[runin]
{\normalfont\bfseries}
{\thesubsection}
{0.5em}
{}
[.]
\definecolor{Blue}  {rgb} {0.282352,0.239215,0.803921}
\definecolor{Green} {rgb} {0.133333,0.545098,0.133333}
\definecolor{Red}   {rgb} {0.803921,0.000000,0.000000}
\definecolor{Violet}{rgb} {0.580392,0.000000,0.827450}
\definecolor{Blue}  {rgb} {0.282352,0.239215,0.803921}
\definecolor{Green} {rgb} {0.133333,0.545098,0.133333}
\definecolor{Red}   {rgb} {0.803921,0.000000,0.000000}
\definecolor{Violet}{rgb} {0.580392,0.000000,0.827450}
\newcounter{jfc}
\begin{document}

	\vspace*{0.5cm}
	\begin{center}	\textbf{\large{Admissible Skein Modules and Ansular Functors:  A Comparison}}\\	\vspace{1cm}	{\large Lukas Müller $^{a}$} \ and \ \ {\large Lukas Woike $^{b}$}\\ 	\vspace{5mm}{\slshape $^a$ Perimeter Institute  \\  N2L 2Y5 Waterloo \\ Canada}	\\[7pt]	{\slshape $^b$ Institut de Mathématiques de Bourgogne\\ 
			UMR 5584 \\ CNRS \& Université de Bourgogne \\ F-21000 Dijon \\  France }\end{center}	\vspace{0.3cm}	
	\begin{abstract}\noindent 
		Given a finite ribbon category, which is a particular case of a cyclic algebra over the operad of genus zero surfaces, there are two possibilities for an extension defined on all three-dimensional handlebodies: On the one hand, one can use the admissible skein module construction of Costantino-Geer-Patureau-Mirand. On other hand, by a construction of the authors using Costello's modular envelope, one can build a so-called ansular functor, a handlebody version of the notion of a modular functor. Unlike the admissible skein modules with their construction through the Reshetikhin-Turaev graphical calculus, the ansular functor is defined purely through a universal property. In this note, we prove the widely held expectation that these constructions are related by giving an isomorphism between them, with the somewhat surprising subtlety that we need to include consistently on one of the sides an additional boundary component labeled by the distinguished invertible object of Etingof-Nikshych-Ostrik. In other words, the constructions agree on handlebodies up to a `background charge' that becomes trivial in the unimodular case. Our comparison result includes the handlebody group action as well as the skein algebra action.
	\end{abstract}

\tableofcontents

\spaceplease
\section{Introduction and summary}
Skein theory~\cite{turaevck,turaevskein,hp92,Przytycki,Walker}
 is often encountered as a construction procedure for various kinds of state spaces in quantum topology that are 
  part of some sort of topological field theory.
 For a given manifold $M$ (for us  of dimension 
 three, moreover 	 oriented, possibly with boundary)
 and a certain 
 ribbon category $\cat{A}$,	 one builds the so-called \emph{skein module} $\Sk_\cat{A}(M)$ for $\cat{A}$ and $M$, a vector space.
 The exact structure and properties needed on $\cat{A}$ will depend on the context and will be discussed momentarily. 
 Often $\cat{A}$ has a representation-theoretic origin. It can for instance arise as a category of modules over a quantum group, but it might as well come from a suitable vertex operator algebra.
 The skein module $\Sk_\cat{A}(M)$ is, roughly speaking, freely generated by all 
 ribbons in $M$ that are labeled in a certain way by the objects and morphisms of $\cat{A}$, modulo local relations arising from the evaluation of the Reshetikhin-Turaev
 graphical calculus of $\cat{A}$ on 
 three-dimensional cubes. 
 The skein module $\SkAlg_\cat{A}(\Sigma)= \Sk_\cat{A}(\Sigma \times [0,1])$, where $\Sigma$ is a surface (again always compact and oriented),
 comes with an algebra structure through the stacking along the interval direction.
 One calls this the \emph{skein algebra} of $\Sigma$.
 If $\Sigma = \partial M$, the skein algebra $\SkAlg_\cat{A}(\Sigma)$ acts on the skein module $\Sk_\cat{A}(M)$.
 We refer to \cite{roberts,masbaumroberts} for more background or~\cite{masbaumja} for a short overview.

For the three-dimensional version of skein theory
discussed in this note, $\cat{A}$ will be a \emph{finite ribbon category} in the sense of \cite{etingofostrik,egno}.\label{deffiniteribbon} This means:
\begin{itemize}
	\item $\cat{A}$ is a \emph{finite $k$-linear category} over an algebraically closed field $k$ that we fix throughout. Finiteness means in this context that $\cat{A}$ is a linear abelian category with finite-dimensional morphism spaces,
	finitely many simple objects up to isomorphism, enough projective objects and finite length for every object.
	
	\item $\cat{A}$ has a monoidal product $\otimes : \cat{A}\boxtimes \cat{A}\to \cat{A}$, where $\boxtimes$ is the Deligne product, that is rigid ($\otimes$  has two-sided duals), and has a simple unit. 
	One then calls $\cat{A}$ a \emph{finite tensor category}.
	We denote the dual of $X\in \cat{A}$ by $X^\vee$.
	The remaining conditions will actually imply that this is a two-sided duality.
	Therefore, we allow ourselves not to distinguish between the two types of dualities.

	\item The monoidal product $\otimes$ comes with a \emph{braiding} $c_{X,Y}:X\otimes Y \cong Y \otimes X$ and a \emph{balancing} $\theta_X:X\to X$, i.e.\ a natural automorphism with $\theta_I=\id_I$ for the monoidal unit $I$ of $\otimes$ and $\theta_{X\otimes Y}=c_{Y,X}c_{X,Y}(\theta_X \otimes \theta_Y)$.
	This balancing is \emph{ribbon} in the sense that $\theta_{X^\vee}=\theta_X^\vee$.
	\end{itemize}
A finite ribbon category whose braiding is \emph{non-degenerate} in the sense that $c_{Y,X}c_{X,Y}=\id_{X \otimes Y}$ for all $Y\in\cat{A}$ implies that $X$ is isomorphic to a direct sum of finitely many copies of $I$ is called a \emph{modular category}~\cite{turaev,baki,kl,shimizumodular}.

The classical skein-theoretic constructions pertain to the case in which the finite ribbon category $\cat{A}$ is semisimple. A finite ribbon category that is semisimple is called a \emph{ribbon fusion category}.
If $\cat{A}$ is actually a \emph{modular fusion category}, i.e.\ modular and semisimple, then 
for any handlebody $H$ (for us always three-dimensional, compact and oriented)
the skein module $\Sk_\cat{A}(H)$ appears as the state space of an anomalous three-dimensional topological field theory, the \emph{Reshetikhin-Turaev theory}~\cite{rt1,rt2,turaev} evaluated on the boundary surface $\Sigma$ of $H$. 
In particular, the representation of the mapping class group $\Map(H)$ of the handlebody $H$ on $\Sk_\cat{A}(H)$  
extends in this case to a projective representation of the mapping class group $\Map(\Sigma)$ of the surface $\Sigma$. A skein-theoretic construction of these representations is given in~\cite{roberts,masbaumroberts}, see \cite{andersenfaithful,fww} for the remarkable asymptotic faithfulness
 properties of these representations that are often referred to as \emph{quantum representations} of mapping class groups.

 One major 
  problem that has been approached from various different angles is the generalization of skein-theoretic methods
 beyond semisimplicity,
 a vast generalization
 that is technically  rather demanding.
 A rather conceptual approach to skein-theoretic methods is the one based on \emph{factorization homology}~\cite{AF}.
 For a finite ribbon category $\cat{A}$ (or more generally a framed $E_2$-algebra) and a surface $\Sigma$ (always compact and oriented, with parametrized boundary), factorization homology produces a category $\int_\Sigma \cat{A}$; we should think of $\int_\Sigma \cat{A}$ as the result of integrating $\cat{A}$ over the surface $\Sigma$. Applications of the factorization homology of ribbon categories were given in~\cite{bzbj,bzbj2,bjss,skeinfin}, and the  viewpoint on skein theory proposed therein is the following: Since factorization homology is functorial with respect to embeddings, the embedding $\emptyset \to \Sigma$ of the empty manifold into $\Sigma$ induces a right exact functor $\vect \to \int_\Sigma \cat{A}$ from the category of finite-dimensional vector spaces over $k$ to $\int_\Sigma \cat{A}$. This functor is determined by its value on $k\in\vect$ and gives us an object $\cat{O}_\Sigma\in \int_\Sigma \cat{A}$, the so-called \emph{quantum structure sheaf}. Its endomorphism algebra
 \begin{align}
 \label{eqndefskeinalg}	\SkAlg_\cat{A}(\Sigma) = \End_{\int_\Sigma \cat{A}}(\cat{O}_\Sigma) 
 	\end{align}
 is the skein algebra of $\cat{A}$ on $\Sigma$~\cite[Definition~2.6]{skeinfin}. 
If one wants to use~\eqref{eqndefskeinalg} as the definition of the skein algebra, then one must show that~\eqref{eqndefskeinalg} reduces to the classical definition of the skein algebra if $\cat{A}$ is a ribbon fusion category. This non-trivial fact was established in~\cite{cooke}.
Note that in~\eqref{eqndefskeinalg} $\cat{A}$ just needs to be a framed $E_2$-algebra, i.e.\ a balanced and braided category. It does not have to be rigid.

Skein modules for handlebodies fit into this picture as follows:
If $\cat{A}$ is a finite ribbon category, then it is a particular instance of a \emph{cyclic} framed $E_2$-algebra~\cite{cyclic}, with the notion of cyclicity being a higher categorical version of the one in~\cite{gk}.
 Using the results of~\cite{costello,giansiracusa}, it is then shown in~\cite{mwansular} that $\cat{A}$ extends uniquely to a modular algebra over the modular operad of handlebodies, a so-called \emph{ansular functor} $\widehat{\cat{A}}$, the `handlebody version' of the familiar notion of a modular functor~\cite{Segal,ms89,turaev,tillmann,baki}.
Its value $\widehat{\cat{A}}(H)$ on a handlebody $H$, sometimes referred to as \emph{space of conformal blocks}, comes with a representation of the mapping class group $\Map(H)$ of the handlebody.  
For $n\ge 0$ disks embedded in $\partial H$, we obtain more generally a right exact functor $\widehat{\cat{A}}(H):\cat{A}^{\boxtimes n}\to\vect$.

The ansular functor $\widehat{\cat{A}}$ induces moreover a right exact functor $\PhiA(H):\int_\Sigma \cat{A} \to \vect$, the so-called \emph{generalized handlebody skein module}~\cite[Section~4]{brochierwoike}. Since $\PhiA(H) (\cat{O}_\Sigma)$ can be canonically identified with $\widehat{\cat{A}}(H)$,
the vector space
 $\widehat{\cat{A}}(H)$ comes, thanks to~\eqref{eqndefskeinalg}, 
 with an action of the skein algebra $\SkAlg_\cat{A}(\Sigma)$. 
The construction of $\widehat{\cat{A}}$ and $\PhiA$ is purely topological, and the category $\cat{A}$ does not even have to be rigid. A weaker form of duality, a so-called \emph{Grothendieck-Verdier duality} in the sense of~\cite{bd} suffices. Instead of requiring  the monoidal product to be rigid, we just ask for an  anti-equivalence $D:\cat{A}\to\cat{A}^\opp$, to be thought of as a weak form of duality, and a dualizing object $K\in \cat{A}$ (in the rigid case, this would be the unit) connected by natural isomorphisms $\cat{A}(X\otimes Y,K)\cong \cat{A}(X,DY)$ for all $X,Y\in \cat{A}$, where $\cat{A}(-,-)$ denotes the hom vector spaces in the linear category $\cat{A}$.
 One also asks for $D$ and $\theta$ to be compatible,
 $D\theta_X = \theta_{DX}$; we refer to \cite{bd,cyclic,brochierwoike} for details.
	
	Even though the construction of $\widehat{\cat{A}}(H)$ for a handlebody $H$ does not need $\cat{A}$ to be rigid, this seems necessary if one wants to define a skein module for $\cat{A}$ on every three-dimensional manifold $M$. Such a construction was given by
	Costantino, Geer and Patureau-Mirand
	 in~\cite{asm} where the authors build, without any relation to the above-mentioned objects $\widehat{\cat{A}}$ or $\PhiA$,  a so-called \emph{admissible skein module} $\skA(M)$ by restricting the classical skein-theoretic methods to the tensor ideal of projective objects of the finite ribbon category $\cat{A}$. This skein module has an action
	of the mapping class group of the handlebody
	 and an action of a type of skein algebra for $\partial M$, but these skein algebras are not necessarily  unital.

For a ribbon category $\cat{A}$ and a handlebody $H$, there are now two a priori different constructions possible: 
We have the value $\widehat{\cat{A}}(H)$ of the ansular functor associated to $\cat{A}$ on a handlebody
$H$ with its $\Map(H)$-action~\cite{cyclic,mwansular}, which is also a module over the skein algebra~\cite[Section~4]{brochierwoike} thanks to the functors $\PhiA$.  On the other hand, we have the admissible skein module $\skA(H)$ for $\cat{A}$ and $H$~\cite{asm}.
Of course, $\widehat{\cat{A}}(H)$ can be defined without rigidity while the admissible skein modules require rigidity and can be defined on all three-dimensional manifolds. In other words, both constructions admit generalizations in somewhat orthogonal directions, but we need to compare them on their \emph{common domain of definition}. 
	This comparison will have to bring together rather abstract constructions via the modular envelope of cyclic operads and factorization homology with the construction used for admissible skein modules that is based on the graphical calculus of ribbon categories. 
	It seems to be an expectation that such a connection can be made, and it is the purpose of this note to formulate and prove such a comparison statement.

	A first guess might be that both constructions are equivalent, but it is not only difficult to devise a direct proof of this --- it is also, in this na\" ive form, false! 
	By \cite[Theorem~7.8]{cyclic}, for a rigid Grothendieck-Verdier duality, the genus zero space of conformal blocks with no insertions (this is the value for a three-dimensional ball) is given by the endomorphisms of the monoidal unit, which is just the ground field $k$. In other words, the usual normalization axiom in~\cite{baki} is satisfied in this case. 
	By \cite[Corollary~3.2]{asm} the admissible skein module
	 for the three-dimensional ball is the linear dual of the space of possibly degenerate modified traces.
	This is isomorphic to $k$ if and only if $\cat{A}$ is \emph{unimodular}, i.e.\
	if and only if
	the distinguished invertible object $\alpha \in \cat{A}$ from~\cite{eno-d}
	controlling the quadruple dual via $-^{\vee\vee\vee\vee}\cong \alpha \otimes - \otimes \alpha^{-1}$ is isomorphic to the monoidal unit.
	Otherwise, it is zero.
	
	The mistake in the na\" ive approach to the comparison
	is the assumption that the correct Grothen\-dieck-Verdier duality that we need to choose on $\cat{A}$ for the comparison is the rigid duality.
	In fact, by \cite[Theorem~4.2]{mwcenter} the possible ribbon Grothendieck-Verdier 
	dualities on $\cat{A}$ as balanced braided category are of the form $D=\beta \otimes -^\vee$,
	 where $\beta \in \cat{A}$ is in the balanced Müger center, i.e.\ it
	  trivially double braids with all objects and has trivial balancing. 
	Let us denote this ribbon Grothendieck-Verdier category with the $\beta$-shifted duality by $\cat{A}_\beta$. In the comparison statement, we will encounter such a shifted duality.
	
	There is one additional technical detail that is more cosmetic in nature that we have to address before formulating the comparison:
	Let $M$ be a three-dimensional oriented manifold with a 
	collection of 
	$n\ge 0$ disks embedded in the boundary surface $\partial M$, each of which are labeled by a projective object $P_i \in \Proj \cat{A}$, where $1\le i\le n$. The construction of~\cite{asm} then gives us an admissible skein module $\skA(M;P_1,\dots,P_n)$. Actually, it would suffice to require one of the objects to be projective, but this does not make a difference by the arguments from~\cite{brownhaioun}. 
	In our construction using the modular envelope and factorization homology, there is no restriction to projective objects, so we have to resolve this discrepancy. 
	To the linear functor $\skA(M;-)$ from~\cite{asm},
	 we apply the standard procedure
	 of finite free linear cocompletion to obtain a right exact
	 functor $\SkA(M;-):\cat{A}^{\boxtimes n} \to \vect$, the \emph{cocompleted admissible skein module}. 
	We can now state our comparison result for the ansular functor $\widehat{\cat{A}}_{\alpha^{-1}}$
	for the $\alpha^{-1}$-shifted version of $\cat{A}$ with the cocompleted admissible skein modules, restricted to handlebodies:
	
	\spaceplease
	\begin{reptheorem}{thmahatasm}[Comparison result]
			Let $\cat{A}$ be a finite ribbon category and $H$ a handlebody with $n\ge 0$ disks embedded in its boundary surface. Then there is a canonical natural $\Map(H)$-equivariant isomorphism of functors
		\begin{align}
		\widehat{\cat{A}}_{\alpha^{-1}}(H;-) \ra{\cong} \SkA(H;-) : \cat{A}^{\boxtimes n} \to \vect
		\end{align}
		between the value of the modular extension $\widehat{\cat{A}}_{\alpha^{-1}}$ of the balanced braided category 
		$\cat{A}$ with Grothendieck-Verdier duality $D=\alpha^{-1}\otimes -^\vee$  on $H$ and the cocompleted
		admissible skein module $\SkA(H;-)$. 
		In the case $n=0$ (no embedded disks), the completion step for admissible skein modules is not needed, and we obtain a $\Map(H)$-equivariant isomorphism
		\begin{align}
		\widehat{\cat{A}}_{\alpha^{-1}}(H) \ra{\cong} \skA(H) 
		\end{align}
		of vector spaces
		for the uncompleted admissible skein modules $\skA(H) $.
	\end{reptheorem}

We explain in Remark~\ref{rembcharge} why this can be interpreted as $\widehat{\cat{A}}$ and the admissible skein modules agreeing
on handlebodies
 up to a background charge.

The comparison in Theorem~\ref{thmahatasm}
is phrased as an equivalence of ansular functors in Theorem~\ref{Thmcomparisionansular}, but here in the introduction we prefer to present the spelled out version.
	The main technical ingredient for the proof of the comparison result
	are the results of~\cite{cyclic,mwansular} on cyclic algebras 
	 and the factorization homology construction of spaces of conformal blocks~\cite{brochierwoike}, in combination with the very recent results~\cite{brownhaioun,rst} on the admissible skein modules of~\cite{asm}. 
	
	Theorem~\ref{thmahatasm} is a comparison as handlebody group representations. For the skein module action, we can make the following statement:
	For a handlebody $H$ with $n\ge 0$ disks embedded in its boundary, $\Sigma = \partial H$ and a finite ribbon category $\cat{A}$ (in fact even a ribbon Grothendieck-Verdier category), we have, as explained above, the
	generalized handlebody skein module
	$\PhiA(H):\int_\Sigma \cat{A}\to \cat{A}^{\boxtimes n}$ from \cite[Section~4]{brochierwoike}
	 that turns out to be a $\int_{\partial \Sigma \times [0,1]} \cat{A}$-module map.
	On the other hand, through
	the construction in~\cite[Section~2.3]{brownhaioun}, we obtain a right exact functor $\SkA(H):\SkCatA(\Sigma) \to \cat{A}^{\boxtimes n}$ (here $\SkCatA(\Sigma)$ is the finite free cocompletion of $\skcatA(\Sigma)$), and by \cite[Theorem~3.10]{brownhaioun} an equivalence
	\begin{align}  
	\int_\Sigma \cat{A}\simeq \SkCatA(\Sigma) \ ,  \label{eqnequivfhskein}
	\end{align}
	which is exactly a non-semisimple generalization of the results in~\cite{cooke}.

	\begin{reptheorem}{thmgenskeinmodule}
		For a unimodular finite ribbon category $\cat{A}$ and a handlebody $H$ with $n\ge 0$ embedded disks in its boundary surface $\Sigma$, the generalized handlebody skein module
		$\PhiA(H):\int_\Sigma \cat{A} \to \cat{A}^{\boxtimes n}$ agrees, under the equivalence~\eqref{eqnequivfhskein} between factorization homology and the freely cocompleted skein category,
		with the map $\SkA(H;-) : \SkCatA(\Sigma) \to \cat{A}^{\boxtimes n}$
		as a module map over $\int_{\partial \Sigma \times [0,1]} \cat{A}$.  
	\end{reptheorem}

As mentioned above, the map $\PhiA(H):\int_\Sigma \cat{A}\to \cat{A}^{\boxtimes n}$ endows $\widehat{\cat{A}}(H)$ with an action of the skein algebra $\SkAlg_\cat{A}(\Sigma) = \End_{\int_\Sigma \cat{A}}(\cat{O}_\Sigma) $.
On the admissible skein module $\SkA(H;-)$, there is also an action of the skein algebra thanks to the result~\eqref{eqnequivfhskein} from~\cite{brownhaioun}.
With Theorem~\ref{thmgenskeinmodule},
we prove:

\begin{repcorollary}{corskeinmodules}
	For a unimodular finite ribbon category $\cat{A}$, and any handlebody $H$, there is an isomorphism $\widehat{\cat{A}}(H)\cong \SkA(H)$ of skein modules. 
\end{repcorollary}

This is a comparison of the skein module structure from \cite[Section~4]{brochierwoike} with the one from~\cite[Section~2.3]{brownhaioun}.
The comparison of the latter with the non-unital skein action of~\cite{asm} is discussed already in \cite[Remark~2.19]{brownhaioun}.
	
As another immediate consequence of Theorem~\ref{thmgenskeinmodule},
we can express the admissible skein modules of a closed three-dimensional manifold entirely through factorization homology quantities, a fact that will be relevant in
 follow-up work:
\spaceplease
\begin{repcorollary}{corexcisionphi}[Excision for admissible skein modules in terms of the generalized skein modules $\PhiA$]
	Let $\cat{A}$ be a unimodular finite ribbon category, $M$ a closed oriented three-dimensional manifold and 
	$M=H' \cup_\Sigma H$ a Heegaard splitting for $M$, with gluing along a surface $\partial \bar{H'} = \Sigma = \partial H$.
	Then the admissible skein module for $M$
	can be obtained as a coend over the generalized skein modules $\PhiA(H)$ and $\PhiA(H')$, i.e.\ there is a canonical isomorphism
	\begin{align}
	\label{eqnheegaard}	\int^{P \in \Proj \int_\Sigma \cat{A}} \PhiA(H';P^\vee)\otimes \PhiA(H;P) \ra{\cong}
		\skA(M) \ . 
	\end{align}
\end{repcorollary}

Finally, let us highlight that the comparison
of ansular functors and admissible skein modules should suggest \emph{in no way}
that the two constructions could replace each other, thereby making the other construction superfluous.
Ansular functors can be constructed beyond rigid dualities for which skein-theoretic methods using a graphical calculus remain unavailable.
On the other hand, ansular functors a priori do not allow to build skein modules for three-manifolds that are not handlebodies. 
For example, it is only through the comparison that we know that expressions like the left hand side of~\eqref{eqnheegaard} are independent of the Heegaard splitting.

Instead, the comparison is enriching for both sides: For the ansular functors, we obtain --- at least in the rigid case --- a much needed graphical calculus.
The admissible
skein modules in turn inherit a characterization through a universal property that comes through the description via the modular envelope that is now available thanks to the comparison result.

	\vspace*{0.2cm}\textsc{Acknowledgments.} We thank
Adrien Brochier, Jennifer Brown, Benjamin Ha\"ioun and Deniz Yeral for helpful discussions related to this project.
LM gratefully acknowledges support of the Simons Collaboration on Global Categorical Symmetries. Research at Perimeter Institute is supported in part by the Government of Canada through the Department of Innovation, Science and Economic Development and by the Province of Ontario through the Ministry of Colleges and Universities. The Perimeter Institute is in the Haldimand Tract, land promised to the Six Nations.
LW gratefully acknowledges support
by the ANR project CPJ n°ANR-22-CPJ1-0001-01 at the Institut de Mathématiques de Bourgogne (IMB).
The IMB receives support from the EIPHI Graduate School (contract ANR-17-EURE-0002).

\section{A mostly self-contained, categorically oriented introduction to skein modules in dimension three}
In this section, we recall the definition and basic properties of
 the skein modules of a finite ribbon category $\cat{A}$.
We will discuss 
 the possibly non-semisimple situation, meaning that we will directly present the admissible 
skein module construction~\cite{asm}. In some places, 
we will use tools from~\cite{fsy-sn,sn,brownhaioun,rst} 
to adapt the presentation a little more to the purposes of this note.

For every oriented three-dimensional manifold $M$, 
we fix the additional structure  of a boundary parametrization, i.e.\ an
orientation-preserving embedding of a two-dimen\-sional compact oriented surface (potentially with boundary) into the boundary $\partial M$
of $M$. We call \begin{itemize}
	\item the complement of the image of the embedding in $\partial M$
the \emph{free boundary}, \item the image of closed surfaces under the embedding
the \emph{closed boundary}, \item and the image of surfaces with non-empty
boundary under the embedding the \emph{open boundary},   \end{itemize}
with the terminology being borrowed from the analogous two-dimensional situation discussed for example in~\cite[Section~2]{sn}.

A \emph{ribbon graph} in $M$~\cite[Chapter~I.2]{turaev} is a collection of embedded oriented ribbons (`thin rectangles' $[0,1]^2$ or `thin loops' $\mathbb{S}^1\times [0,1]$) and coupons (`thick' rectangles $[0,1]^2$) in $M$, see Figure~\ref{Fig:Ribbon1}.\begin{figure}[h]
	\begin{center}
		\begin{overpic}[scale=1]
			{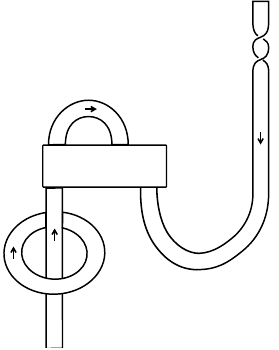}
		\end{overpic}
	\end{center}
	\caption{A local illustration of a ribbon graph in a three dimensional manifold with four ribbons and one coupon.}
	\label{Fig:Ribbon1}
\end{figure}     
 Ribbons (if they are of the form $[0,1]^2$) and coupons both have an \emph{in-boundary} and an \emph{out-boundary} given by $\{0\} \times [0,1]$ and $\{1\} \times [0,1]$, respectively. We impose the following conditions:
\begin{itemize}
	\item Coupons are not allowed to intersect the boundary $\partial M$ of $M$. 
	\item The in-boundaries and out-boundaries of ribbons are only allowed to end transversally at the closed or open boundary of $M$ or the in-boundary or out-boundary of a coupon.
	\item These are the only intersections between the boundary of $M$, the ribbons
	and the coupons. 
	\item There is at least one ribbon attached to every coupon.     	
\end{itemize} 
Let $\Ca$ be a finite
ribbon category as defined on page~\pageref{deffiniteribbon} of the introduction.
An \emph{$\Ca$-labeled ribbon graph in $M$} is a non-empty ribbon graph $\Gamma$ in $M$ together with an assignment of a projective object of $\Ca$ to every ribbon. The category $\ARGraphs(M)$ of $\Ca$-labeled ribbon graphs in $M$ is defined as follows:  
\begin{itemize}
	\item Objects are  $\Ca$-labeled ribbon graphs in $M$. 
	\item Morphisms are generated by replacements of ribbon graphs in properly embedded cubes $[0,1]^3$ 
	 (see Figure~\ref{Fig:Ribbon2}): 
		The intersection of the boundary of the cube
	with the ribbon graphs must consists of ribbons intersecting $[0,1] \times \{1/2\} \times \{0,1\}  $ transversally.
	We then replace the ribbon graph inside the image of this cube with the ribbon graph that has a coupon placed somewhere on $(0,1) \times \{1/2\} \times (0,1)$ (we will have to make some standard choice for definiteness, but it does not matter which one) and connected to the prescribed boundary intervals by ribbons in $[0,1] \times \{1/2\} \times [0,1]$. 
	We impose the following relations: 
	\begin{itemize}
		\item Any replacement within a single cube having the same
		domain and codomain
		is the identity. 
		\item Replacements within disjoint cubes commute, which allows us to unambiguously define
		the replacement within a disjoint union of finitely many cubes that we call \emph{multicubes}. 
		\item First making the replacement for
		 a multicube sitting in a larger multicube and then making the replacement for the
		 larger multicube is the same as making the evaluation for the larger multicube directly.   
	\end{itemize} 
\end{itemize}

	\begin{figure}[h]	
	\begin{center}
		\begin{overpic}[scale=1]
			{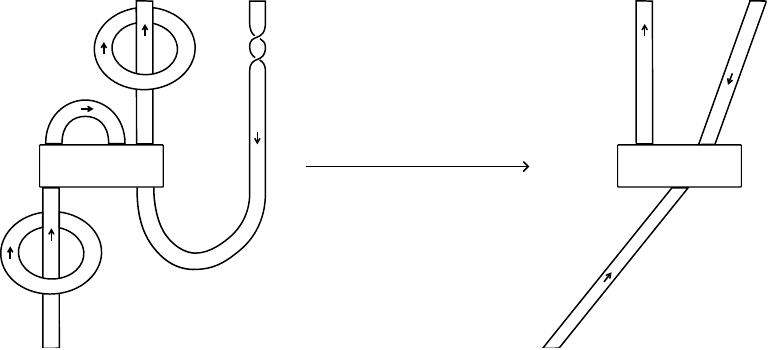}
			\put(49,25){replace by}
		\end{overpic}
	\end{center}
	\caption{A local illustration of a morphism in $\ARGraphs(M)$. The whole ribbon shown here lies in a cube that we do not depict for simplicity.}
	\label{Fig:Ribbon2}
\end{figure}

For a three-dimensional manifold $M$, a \emph{boundary label} consists of a collection of embedded intervals $[0,1]$ in the boundary,
 each of which is labeled by a projective object of $\Ca$ and a sign, encoding whether ribbons are incoming or outgoing at the interval. Every $\Ca$-labeled ribbon graph in $M$ gives rise to a boundary label by restriction to the boundary $\partial M$. For a given boundary label $B$, we denote by $\ARGraphs(M;B)$ the full subcategory of $\ARGraphs(M)$ spanned by those $\Ca$-labeled ribbon graphs restricting to $B$. 

The Reshetikhin-Turaev graphical calculus~\cite[Section~I.2]{turaev} uniquely evaluates an $\Ca$-labeled ribbon diagram in the cube $[0,1]^3$ (where in addition every coupon is labeled with a morphism in $\Ca$ from the incoming labels to the outgoing labels of the coupon) to morphisms in $\Ca$.  Based on the graphical calculus,
one can define the functor 
\begin{align}
	\mathbb{E}_{\Ca}^{M,B}\colon \ARGraphs(M;B) \to \Vect_{k} 
\end{align}  
sending an $\Ca$-labeled ribbon graph $\Gamma$ to the following tensor product of hom spaces
\begin{align}
	\mathbb{E}_{\Ca}^{M,B} (\Gamma) \coloneqq \bigotimes_{c \ \text{coupon of}\ \Gamma} \Ca \left(X_{c,1}^{\varepsilon^{c}_1}\otimes \dots \otimes X_{c,i_c}^{\varepsilon^{c}_{i_c}}, Y_{c,1}^{\varepsilon^{c}_1}\otimes \dots \otimes Y_{c,j_c}^{\varepsilon^{c}_{j_c}} \right) \ , 
\end{align} 
where 
the $X_{c, \cdot}$ and $Y_{c,\cdot }$ are $i_c$ and $j_c$ many labels for the ribbons ending on the in-boundary or out-boundary of the coupon $c$, respectively; $\varepsilon^{c}_\cdot \in \{\pm \}$ is a sign encoding whether the orientation of ribbon matches the orientation of the coupon 
($+$ for match, $-$ for mismatch), and $X^+\coloneqq X$, and $X^{-}\coloneqq X^{\vee}$. The value on generating morphisms applies the  Reshetikhin-Turaev functor to the ribbon diagram in the corresponding cube.

\begin{definition}[$\text{recovering the definition from \cite{asm}}$]\label{defadmissibleskein}
	Let $\Ca$ be a finite ribbon category and $M$ a three-dimensional oriented manifold with boundary. For every boundary label $B$, the \emph{admissible skein module} is defined as the colimit
	\begin{align}
		\skA (M;B) \coloneqq \colimsub{\Gamma \in \ARGraphs(M;B)}{\mathbb{E}_{\Ca}^{M,B}(\Gamma)} \ \ . \label{eqncolimit}
	\end{align}
\end{definition}  
\begin{remark}[Isotopy invariance]\label{remisotopy}
	The image of a vector in $\mathbb{E}_{\Ca}^{M,B}(\Gamma)$ for some ribbon graph
	$\Gamma \in \ARGraphs(M;B)$ under the structure map $\mathbb{E}_{\Ca}^{M,B}(\Gamma) \to \skA(M;B)$ will be called a \emph{skein in $M$}.
	The skein remains the same if we deform the ribbon describing $\Gamma$ through an isotopy.  
	This is called \emph{isotopy invariance} and is \emph{not} built into the definition because this would create trouble in other places. 
	Inside cubes, the isotopy invariance holds by construction. The general isotopy invariance can be deduced using~\cite[Corollary~1.3]{edwardskirby} as in~\cite[Section~3.1]{fsy-sn}. 
\end{remark}

\begin{remark}
	The fact the 
	 skein modules defined via this colimit coincide with the ones from~\cite{asm} 
	follows from the same arguments as in the two-dimensional case~\cite[Theorem 3.7]{fsy-sn}.
\end{remark}

\begin{remark}\label{remasmprop}
	We record some properties of $\skA (M;B)$ proven in~\cite{asm}:
	\begin{pnum}
		\item Suppose that $B$ is a three-dimensional ball with projective boundary labels $P_1,\dots,P_n$ with $n\ge 1$ on its boundary sphere, all of them outgoing for simplicity. Then
		\begin{align}\skA (B;P_1,\dots,P_n)\cong \cat{A}(I,P_1\otimes \dots \otimes P_n)\label{eqnskeindisk}\end{align} by a non-canonical isomorphism
		(this is because on the right hand side an order is chosen while $\skA (B;P_1,\dots,P_n)$ is defined independently of such an order).
		\label{remasmpropi}
		In the colimit description~\eqref{eqncolimit}, this can be seen easily because the right hand side of~\eqref{eqnskeindisk} is simply the value of $\mathbb{E}_{\Ca}^{M,B}$ on a terminal object in $\ARGraphs(M;B)$.
		The case without labels (meaning that the boundary sphere is a free boundary) is more complicated:
		By \cite[Corollary~3.2]{asm} $\skA(B)$ for a three-dimensional ball $B$ is given by the linear dual of the space of possibly degenerate modified traces
		\item The obvious
		geometric action on $\skA(M;B)$ by the group $\Diff(M)$ of diffeomorphisms of $M$ preserving the orientation and the boundary parametrization descends to an action of the mapping class group $\Map(M):=\pi_0(\Diff(M))$.
		This follows from the isotopy invariance discussed in Remark~\ref{remisotopy}.   \label{remasmpropii}
		\item For a finite ribbon category, the admissible skein module is finite-dimensional.\label{remasmpropiii} 
		\item The skein module functor is monoidal in the sense that it carries
		disjoint unions of manifolds and their labels to tensor products of vector spaces. \label{remasmpropiv}
	\end{pnum}
\end{remark}

\begin{definition}[$\text{following \cite[Section~2.1]{brownhaioun}}$]
	Let $\Ca$ be a finite ribbon category. 
	For a surface $\Sigma$, possibly with boundary,
	we define the $k$-linear \emph{admissible skein category $\skcatA(\Sigma)$} as follows: 
	\begin{itemize}
		\item Objects are boundary labels $B$ in the interior $\Sigma$ such that every connected component of $\Sigma$ contains at least one boundary interval. 
		\item For $B,C \in \skcatA(\Sigma)$, the morphism space is 
		\begin{align}
			\skcatA(\Sigma)(B;C) \coloneqq \skA (\Sigma\times [0,1], B^\vee , C ) \ \ , 
		\end{align}  
		where $B^\vee$ is the boundary label obtained
		 by replacing every element of $B$ by its dual. 
	\end{itemize}  
	Composition is defined by gluing cylinders. 
\end{definition}

\begin{remark}\label{remskeincat}
	The following observations are standard and known in more classical contexts. In the non-semisimple case, the details are in~\cite{brownhaioun}: 
	\begin{pnum}\item	For a two-dimensional disk $D$, the skein category $\skcatA(D)$ is canonically equivalent to $\Proj \cat{A}$. \label{remskeincati}
		\item Let $\partial^{\operatorname{oc}} M$ be the union of the open and closed boundary of $M$. The admissible skein modules introduced in Definition~\ref{defadmissibleskein} assemble into a linear functor 
		\begin{align}
			\skA(M;-)\colon \skcatA(\partial^{\operatorname{oc}}M) \to \vect \ \ ,  
		\end{align}
		where the action on morphisms is by gluing in cylinders.
		\item There is a canonical equivalence $\skcatA(\bar{\Sigma}) \cong \skcatA(\Sigma)^{\opp}$, where $\bar \Sigma$ is $\Sigma$ is opposite orientation.
	\end{pnum}
\end{remark}

The final result that we will discuss in this section is a locality property for skein modules
 called \emph{excision}.

\begin{theorem}[Excision, $\text{based on~\cite[Theorem 4.4.2]{Walker}}$]\label{Thmexcision}
	Let $\cat{A}$ be a finite ribbon category, $M$ an oriented three-dimensional manifold and $M'$ the manifold constructed by gluing $M$ along two oppositely oriented copies of a boundary surface $\Sigma$. Gluing of ribbon graphs  for boundary labels matching up to duality
	induces an isomorphism 
	\begin{align}
		\int^{P\in \skA(\Sigma)}\skA (M; X, P ,P^\vee) \ra{\cong} \skA (M'; X) \ \ \text{for} \ \ X\in \skA(\partial^{\operatorname{oc}} M' ) \ \ .    \label{eqngluingmap}
	\end{align}    
\end{theorem}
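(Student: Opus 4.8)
The plan is to use the colimit presentation of Definition~\ref{defadmissibleskein} and to exhibit both sides of~\eqref{eqngluingmap} as colimits of essentially the same functor $\mathbb{E}_\cat{A}$, the gluing map then being induced by a cut-and-glue operation on the indexing categories of ribbon graphs. The conceptual starting point is that $\Proj\cat{A}$ is a two-sided tensor ideal, so the construction is compatible with cutting: if a ribbon graph in $M'$ is isotoped to be transverse to the gluing surface $\Sigma$, then the finitely many ribbons crossing $\Sigma$ are labeled by projective objects, and the resulting collection of labeled intervals on each copy of $\Sigma$ is precisely an object $P$ of $\skcatA(\Sigma)$ (on the orientation-reversed second copy this reads as $P^\vee$ via $\skcatA(\bar\Sigma)\cong\skcatA(\Sigma)^\opp$ from Remark~\ref{remskeincat}). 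Cutting along $\Sigma$ thus turns a ribbon graph in $M'$ into one in $M$ with boundary label $(X,P,P^\vee)$ without creating or destroying coupons, and gluing reverses this; in particular $\mathbb{E}_\cat{A}$ — a tensor product indexed by the coupons — is left unchanged, so all the content sits in how the categories of labeled ribbon graphs are related.

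First I would build the gluing map. Given $P\in\skcatA(\Sigma)$ and a ribbon graph $\Gamma$ in $M$ restricting to $(X,P,P^\vee)$, gluing along $\Sigma$ produces a ribbon graph $G_P(\Gamma)$ in $M'$; this is functorial in $\Gamma$ and, via the canonical identification $\mathbb{E}_\cat{A}^{M',X}(G_P(\Gamma))\cong\mathbb{E}_\cat{A}^{M,(X,P,P^\vee)}(\Gamma)$, gives for each $P$ a map $\skA(M;X,P,P^\vee)\to\skA(M';X)$. One then checks dinaturality in $P$: a morphism $P\to P'$ in $\skcatA(\Sigma)$ is itself a skein in $\Sigma\times[0,1]$, and sliding the intersection pattern of $G_P(\Gamma)$ with $\Sigma$ through it is realized by a cube-replacement relation of $\ARGraphs(M';X)$ in a cube straddling $\Sigma$ (together with isotopy invariance). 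Hence the maps assemble to $\int^{P}\skA(M;X,P,P^\vee)\to\skA(M';X)$, which is the gluing map of the statement.

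Next I would produce the inverse. Every ribbon graph in $M'$ can be isotoped to be transverse to $\Sigma$ and to meet it only in ribbons (coupons are pushed into $M\setminus\Sigma$ using that a collar of $\Sigma$ has the form $\Sigma\times(-1,1)$); cutting such a representative yields a class in the coend. By isotopy invariance (Remark~\ref{remisotopy}, via \cite{edwardskirby} as in \cite{fsy-sn}) this recovers the given skein under the gluing map, so the two maps are inverse on representatives, and the only real issue is that the assignment \emph{skein in $M'$} $\longmapsto$ \emph{cut class in the coend} is well defined: independent of the transverse representative and of the isotopy normalizing it, and compatible with the defining relations of $\skA(M';X)$. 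This is the combinatorial heart of excision. A generic isotopy between two transverse representatives is transverse to $\Sigma$ at all but finitely many times, at each of which a local extremum of some ribbon (labeled by a projective $Q$) crosses $\Sigma$, changing the boundary label on $\Sigma$ by a coevaluation/evaluation pair $I\to Q\otimes Q^\vee$, resp.\ $Q^\vee\otimes Q\to I$, realized by short ribbons in $M$; one must check that such a change is absorbed exactly by the coend relation. The cube-replacement relations of $\ARGraphs(M';X)$ require the analogous bookkeeping, but by a further isotopy every such cube can be taken disjoint from $\Sigma$, so the corresponding relation already holds in $\skA(M;X,P,P^\vee)$. I expect this well-definedness to be the main obstacle; it is the admissible (projective-ideal) refinement of Walker's excision \cite[Theorem~4.4.2]{Walker}, running parallel to the two-dimensional argument of \cite{fsy-sn,sn}, with the new points being purely bookkeeping: all cutting labels stay in $\Proj\cat{A}$, and the duality conventions on $\bar\Sigma$ are precisely those built into the coend. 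Once well-definedness is in place, that the two maps compose to the identity on each side is formal.

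One technical caveat I would dispatch along the way: an object of $\skcatA(\Sigma)$ must meet every component of $\Sigma$, whereas a ribbon graph in $M'$ need not cross every component, in which case the cut produces the empty label on that component. This is handled in the standard way by isotoping in an auxiliary small ribbon across the offending component (equivalently, enlarging $\skcatA(\Sigma)$ by the empty objects, which alters neither the coend nor the skein category up to the relevant equivalence), so it does not affect the argument.
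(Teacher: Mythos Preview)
Your overall strategy --- construct an explicit inverse by cutting transverse representatives --- differs from the paper's, which instead proves surjectivity and injectivity separately, with injectivity handled by triangulating any ball in $M'$ that meets $\Sigma$ into tetrahedra lying in $M$ and reducing to the already-established case of two balls glued along a disk. Your route would be more direct if it went through, but two of your steps do not.

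First, the claim that ``by a further isotopy every such cube can be taken disjoint from $\Sigma$'' is doing real work and is asserted without argument. The paper's authors explicitly do \emph{not} make this move: when a ball $B\subset M'$ is not the image of a ball in $M$, they cut $B$ along $\Sigma$, triangulate the pieces, and reduce to step~(ii). If your isotopy claim were straightforward that detour would be unnecessary; at minimum you owe an argument here. Second, and more seriously, your ``technical caveat'' resolution is wrong in the non-semisimple setting. Enlarging $\skcatA(\Sigma)$ by the empty object \emph{does} change the coend when $I$ is not projective --- this is exactly what the admissibility condition is about --- and ``isotoping in an auxiliary small ribbon'' alters the skein, since an unknot labeled by a projective object does not evaluate to $1$. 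The paper confronts the corresponding difficulty head-on: its step~(ii) contains a separate, nontrivial computation for two balls glued along a disk with \emph{no} boundary labels, using that $\skA(B)\cong \cat{A}(\alpha,I)^*$ via modified traces and then a Nakayama-functor calculation to identify $\int^{P\in\Proj\cat{A}}\cat{A}(I,P)\otimes\cat{A}(I,P^\vee)$ with $\cat{A}(\alpha,I)^*$. That algebraic input (which is where $\alpha$ and unimodularity enter the story) is precisely what your ``standard'' fix is trying to sidestep, and it cannot be sidestepped.
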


The main ideas of this result can be found in Walker's notes~\cite{Walker},
where this result is not quite stated in the generality needed here. 
Various versions of Walker's statement appear in the literature, 
e.g.\ in~\cite{skeinfin,fsy-sn,sn,brownhaioun}, 
often for skeins in dimension two (also called \emph{string-nets}) and three, sometimes with 
additional details fleshed out that are not contained in the original notes. 
A very detailed treatment taking all the topological intricacies 
seriously and covering several dimensions at once has recently appeared in~\cite{rst}. 
Given that many of the other available proofs for dimension three lack important details and given that the full proof of the more general statement in~\cite{rst}
is the main result of an article of more than 60 pages,
we will include here a --- to the best of our knowledge --- new and relatively short proof of Theorem~\ref{Thmexcision}, in the hope that it will be useful.
The skein theory expert, to whom many of the arguments will be clear, may of course skip the proof. 

\begin{proof}[\slshape Proof of Theorem~\ref{Thmexcision}] 
	\begin{pnum}\item Whether to $\skA (M; X, P ,Q^\vee)$ we add an element in $\skcatA(\Sigma) (Q,P)$ at the first copy of $\Sigma$ at its boundary label $P$ or the second copy with boundary label $Q^\vee$ does not matter after gluing. Both produce the same element  in $\skA (M'; X)$. This means that the map $\skA (M; X, P ,P^\vee) \to \skA (M'; X)$ is dinatural and hence descends to the coend to produce the map~\eqref{eqngluingmap}. \label{excisionstep1}
		\item
		We now consider the case where $M = B_1 \sqcup B_2$ is given by the disjoint union of two three-dimensional balls that are being glued along a two-dimensional disk $D$. Then $M'=B_1 \cup _D B_2$ is a three-dimensional ball again.
		The boundary label $X$ splits up into a boundary label for $B_1$ formed by projective objects $X_1',\dots,X_m'$ whose ribbons end on $\partial B_1$ and a boundary label for $B_2$ formed by projective objects $X_1'',\dots,X_n''$ whose ribbons end on $\partial B_2$. We allow $m$ or $n$ to be zero.	
		We will use the identification 
		$\skA(D)\simeq \Proj\cat{A}$ (Remark~\ref{remskeincat}~\ref{remskeincati}).
		Two cases have to be distinguished: 
		\begin{itemize}
			\item For $m+n\ge 1$, we
			find with Remark~\ref{remasmprop}~\ref{remasmpropi} and~\ref{remasmpropiv} that
			the map~\eqref{eqngluingmap} is  the map
			\begin{align}
				\int^{P\in \Proj \cat{A}}\cat{A}(I,X_1' \otimes \dots \otimes X_m' \otimes P) \otimes \cat{A}(I,X''_1 \otimes\dots \otimes X_n'' \otimes  P^\vee) \to \cat{A}(I,X^{\otimes}) \\ \text{with } X^{\otimes} = X_1'\otimes \dots \otimes X_m' \otimes X_1'' \otimes \dots \otimes X_n''
			\end{align} 
			using the duality and composing over $P$.
			Since $m+n \ge 1$, the statement now follows from the Yoneda Lemma applied to $\Proj \cat{A}$. 
			
			\item 	The case $m=n=0$ requires more work:
			By \cite[Corollary~3.2]{asm} $\skA(B)$ for a three-dimensional ball is given by the linear dual of the space of possibly degenerate modified traces that we may identify with the space  $\cat{A}(\alpha,I)^*$ thanks to \cite[Theorem~6.4]{shibatashimizu}. With the twisted traces in~\cite{shibatashimizu}, see also~\cite{tracesw}, we find
			\begin{equation}
				\int^{P\in \Proj \cat{A}}\cat{A}(I, P) \otimes \cat{A}(I,  P^\vee)\cong \int^{P\in \Proj \cat{A}}\cat{A}( \nakal P, I )^* \otimes \cat{A}(P,I)
			\end{equation} with the left Nakayama functor $\nakal = \int_{X \in \cat{A}} \cat{A}(X,-) \otimes X$. The functor $F= \cat{A}(-,I)^*$ is right exact and hence
			\begin{align} \int^{P\in \Proj \cat{A}}\cat{A}( \nakal P, I )^* \otimes \cat{A}(P,I) &\cong F \int^{P\in \Proj \cat{A}} \nakal P \otimes \cat{A}(P,I)\\ & \cong F \nakal \int^{P \in \Proj \cat{A}} P \otimes \cat{A}(P,I)\\ & \quad \quad \text{($\nakal$ is exact \cite[Corollary 4.7]{fss})}\\& \cong 
				F \nakal \int^{X \in  \cat{A}} X \otimes \cat{A}(X,I) \quad \text{\cite[Proposition~5.1.7]{kl}}\\ &\cong F \nakal I \ .
				\, \end{align}  
			By \cite[Lemma 4.10]{fss} $\nakal I \cong \alpha$ which leaves us with the isomorphism
			\begin{align}
				\int^{P\in \Proj \cat{A}}\cat{A}(I, P) \otimes \cat{A}(I,  P^\vee)\cong F \alpha \cong \cat{A}(\alpha, I)^* \cong \skA(B) \ . 
			\end{align}
			After carefully chasing through all these identifications, we see that this isomorphism is in fact the gluing map, thereby proving excision also for a three-dimensional ball \emph{without} boundary labels.\end{itemize}
		\label{excisionstep2}

		\item 
		In the general case, it is easy to see that~\eqref{eqngluingmap} is surjective because we can take any skein in $M'$ and isotope one of its ribbons such that it intersects $\Sigma$ (seen as surface in $M'$) transversally.
		The isotopy does not change the element in $\skA (M'; X)$ by isotopy invariance, see Remark~\ref{remasmprop}~\ref{remasmpropii}.
		After performing this isotopy, it is clear that the element is in the image of~\eqref{eqngluingmap}. 
		\item The difficult part is  the injectivity of~\eqref{eqngluingmap}. 
		We need to prove that relations holding in $\skA (M'; X)$ that come from the evaluation of the graphical calculus on any cube in $ M'$ 
		already hold in $\int^{P\in \skA(\Sigma)}\skA (M; X, P ,P^\vee)$. 
		For this, it suffices to prove that the relations for skeins in any three-dimensional closed ball $B$
		in $M'$ already hold in $\int^{P\in \skA(\Sigma)}\skA (M; X, P ,P^\vee)$
		because every properly embedded cube can be slightly 
		enlarged to an embedding of a three-dimensional ball $B$. 
		(The evaluation on a cube induces only relations on skeins that are transversal to the boundary of the cube. We can therefore arrange the skeins to be transversal to the boundary of the ball as well.)
		
		This is clear if $B$ is the
		image of a ball in $M$. If this is not the case,
		then, in analogy to  the proof of~\cite[Theorem~3.2]{sn}, it is enough to prove that $B$ can be obtained by gluing balls that are each image of a ball in $M$ along two-dimensional disks (because then the situation can be played back to the one of step~\ref{excisionstep2}). We now prove that $B$ can indeed be cut this way:
		We first adjust the shape of $B$ slightly such that the intersection of $\partial B$ with $\Sigma$ is transversal. This is possible because the evaluation on $B$ only gives relations for skeins ending transversally on $\partial B$, so the shape of $B$ can be adjusted slightly without changing the relations induced by evaluation on $B$. Now $S=B\cap \Sigma$ is a two-dimensional manifold thanks to transversality, and it cuts $B$ into two three-dimensional manifolds $K$ and $K'$, i.e.\ $B=K \cup_S K'$. In contrast to the two-dimensional situation, $K$ and $K'$ do not need to be a disjoint union of balls (in this case, we would be done), which makes the treatment of the three-dimensional situation somewhat different.
		
		As a remedy, we choose a two-dimensional triangulation $t$ of $S$ and three-dimensional triangulations $T$ for $K$ and $T'$ for $K'$ extending the one on $S$. This is possible by~\cite[Variant~7]{lurie-t}. 
		Now $T\cup T'$ is a three-dimensional triangulation of $B$, with each of the triangles contained in $M$. This finishes the proof. 
	\end{pnum}
\end{proof}

\begin{remark}
	In this remark, we want to give a brief topological argument for the relation between modified traces and (co)integrals for Hopf algebras as presented in~\cite{bbg,shibatashimizuintegral,bgr}:
	If we cut the three-dimensional sphere into two-hemispheres, we find with excision (Theorem~\ref{Thmexcision}) and \cite[Corollary~3.2]{asm}
	\begin{align}
		\int^{P \in \Proj \cat{A}} \cat{A}(I,P) \otimes \cat{A}(P,I)    \cong \{   \text{space of two-sided modified traces}\}^* \ . 
	\end{align} 
	The coend on the left side can be restricted to a projective generator without changing its values, see e.g.\ \cite[Theorem~2.9]{dva}, which means that,
	if $\cat{A}$ is given by finite-dimensional modules over a finite-dimensional ribbon Hopf algebra $H$, we obtain 
	\begin{align} \Hom_H(k,H) \otimes_H k   \cong \{   \text{space of two-sided modified traces}\}^* \ . 
	\end{align} Note that $\Hom_H(k,H)$ is the space $\mathfrak{I}_\ell(H)$ of left integrals for $H$ (depending on the conventions, they might be referred to as cointegrals). The tensor product $\mathfrak{I}_\ell(H)\otimes_H k$ is zero if $H$ is not unimodular. If $H$ is unimodular, it is the space of two-sided integrals.
\end{remark}

\section{The comparison result}
The language and the tool to formulate and prove the comparison result is the notion of an \emph{ansular functor} introduced in~\cite{mwansular} with preparations in~\cite{cyclic}. 
An ansular functor is a consistent systems of handlebody group representations. It is formally defined as a modular algebra over the modular operad $\Hbdy$ of handlebodies. It takes values in a symmetric monoidal bicategory that for our applications will consist of linear categories.
The most reasonable choice in the context of this article is the symmetric monoidal bicategory $\Rexf$ of finite categories over our fixed algebraically closed field $k$, right exact functors and linear natural transformations.
The monoidal product is the Deligne product $\boxtimes$. 
When partially spelling out the definition, an ansular functor $\mathfrak{B}$ consists of the following: \begin{itemize}
	\item It has an underlying category $\cat{A}\in\Rexf$ with a non-degenerate symmetric pairing $\kappa : \cat{A}\boxtimes\cat{A} \to \vect$. Non-degeneracy means that we have a map $\Delta : \vect \to \cat{A}\boxtimes \cat{A}$, a \emph{coevaluation}, satisfying together with $\kappa$ the zigzag relations for duality up to isomorphism.
	The symmetry of $\kappa$ means that $\kappa$ is a homotopy $\mathbb{Z}_2$-fixed point with respect to the action on $\kappa$ through the symmetric braiding of $\boxtimes$. (This structure is not specific to ansular functors; it is just needed to make the endomorphism operad of $\cat{A}$ cyclic.)
	\item For any handlebody $H$ with $n \ge 0$ embedded disks in its boundary surface, we obtain a right exact functor
	\begin{align}
		\mathfrak{B}(H): \cat{A}^{\boxtimes n} \to \vect 
	\end{align}  carrying a representation of the handlebody group $\Map(H)$ subject to the excision property that the functor
	$\mathfrak{B}(H'):\cat{A}^{\boxtimes (n-2)} \to \vect$  
	for a handlebody $H'$ obtained from $H$ by sewing a pair of disks together (assuming $n\ge 2$) is obtained by inserting the coevaluation object $\Delta = \Delta' \boxtimes \Delta'' \in \cat{A}\boxtimes\cat{A}$ 
	(here $\Delta = \Delta' \boxtimes \Delta''$ is Sweedler notation; it should not suggest that $\Delta$ is a pure tensor)
	into the slots associated to these disks.
	This means
	 \begin{align}\label{eqngluing} \mathfrak{B}(H')\cong \mathfrak{B}(H; \dots,\Delta',\dots,\Delta'',\dots)\end{align} by a specific isomorphism that is also part of the data.
\end{itemize}

We will now make the first observation: For a finite ribbon category $\cat{A}$, the admissible skein modules produce an ansular functor.
To this end, let $H$ be a handlebody with $n\ge 0$ embedded disks.
Now
\begin{align} \skA(H): (\Proj \cat{A})^{\otimes n} \to \vect \ , \quad (P_1,\dots,P_n) \mapsto \skA(M;P_1,\dots,P_n) \label{eqnskHproj} \end{align} is a linear functor with $\Map(H)$-action. 
In order to obtain something that could possibly match the above description, we would need a right exact functor out of $\cat{A}^{\boxtimes n}$,
but this can easily be done by applying the finite free cocompletion, a standard technique that, in the finite setting, is recalled in detail e.g.\ in~\cite[Section~5]{sn}: We can recover $\cat{A}$ from $\Proj \cat{A}$ be adding finite colimits, i.e.\ by passing from $\Proj \cat{A}$ to the category of functors $(\Proj \cat{A})^\opp \to \vect$. 
The linear functor~\eqref{eqnskHproj} then extends uniquely to a	 right exact functor
\begin{align} \SkA(H): \cat{A}^{\boxtimes n} \to \vect \ .  \label{eqnskHext} \end{align}
By setting $H=\mathbb{D}^2 \times [0,1]$, we obtain a pairing.
This data gives us indeed an ansular functor:

\begin{proposition}\label{propasmansular}
	For a finite ribbon category $\cat{A}$, the freely cocompleted admissible skein modules 
	\begin{align} \SkA(H;-):\cat{A}^{\boxtimes n} \to \vect \end{align}
	for handlebodies with $n\ge 0$ embedded disks form a $\Rexf$-valued ansular functor. 
\end{proposition}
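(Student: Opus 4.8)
The plan is to verify directly that the assignment $H\mapsto\SkA(H;-)$ --- equipped with the pairing coming from the solid cylinder $\mathbb{D}^2\times[0,1]$, the geometric mapping class group actions, and the gluing isomorphisms provided by excision --- is a modular algebra over the modular operad $\Hbdy$ with values in $\Rexf$, i.e.\ an ansular functor in the sense recalled before the statement. Concretely, one must produce: the underlying category together with a symmetric non-degenerate pairing; the right exact functors with their $\Map(H)$-actions; compatibility with disjoint unions; and the excision/gluing axiom \eqref{eqngluing} together with the coherences of a modular operad algebra. The underlying category is the finite free cocompletion of $\skcatA(\mathbb{D}^2)\simeq\Proj\cat{A}$ (Remark~\ref{remskeincat}~\ref{remskeincati}), which is $\cat{A}$ itself and hence lies in $\Rexf$. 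The pairing is $\kappa=\SkA(\mathbb{D}^2\times[0,1];-,-):\cat{A}\boxtimes\cat{A}\to\vect$, computed on projective labels by $\kappa(P,Q)\cong\cat{A}(I,P\otimes Q)$ via Remark~\ref{remasmprop}~\ref{remasmpropi}; its symmetry is induced by the orientation-preserving involution of $\mathbb{D}^2\times[0,1]$ exchanging the two boundary disks, the resulting homotopy $\mathbb{Z}_2$-fixed point structure being exactly the one through which the ribbon structure of $\cat{A}$ makes $\cat{A}$ a cyclic $\framed$-algebra, cf.~\cite{cyclic}. Non-degeneracy follows from $\Rexf$ being a rigid symmetric monoidal bicategory with $\cat{A}^\vee\simeq\cat{A}^\opp$ and from rigidity of $\cat{A}$: one can take the coevaluation object $\Delta=\int^{P\in\Proj\cat{A}}P\boxtimes P^\vee\in\cat{A}\boxtimes\cat{A}$, which exists as a finite colimit and satisfies the zig-zag identities with $\kappa$ up to canonical isomorphism. (Which ribbon Grothendieck--Verdier duality this pairing encodes --- in particular the role of the distinguished object $\alpha$ --- is only settled by the comparison Theorem~\ref{thmahatasm} and is irrelevant here; all we need is that $\kappa$ is symmetric and non-degenerate, which makes the endomorphism operad of $\cat{A}$ cyclic.)

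For a handlebody $H$ with $n\ge0$ disks, $\SkA(H;-):\cat{A}^{\boxtimes n}\to\vect$ is the right exact functor of \eqref{eqnskHext}, obtained from the linear functor \eqref{eqnskHproj} by finite free cocompletion; its values are finite-dimensional by Remark~\ref{remasmprop}~\ref{remasmpropiii}, so it lies in $\Rexf$. The geometric $\Diff(H)$-action on $\skA(H;-)$ descends to $\Map(H)$ by Remark~\ref{remasmprop}~\ref{remasmpropii}, is natural in the projective labels, and hence extends by the universal property of finite free cocompletion to a $\Map(H)$-action on $\SkA(H;-)$. That disjoint unions of handlebodies are sent to Deligne products is Remark~\ref{remasmprop}~\ref{remasmpropiv}, again transported along the (monoidal) cocompletion.

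The composition of $\Hbdy$ is realized by gluing handlebodies, or one handlebody to itself, along pairs of embedded boundary disks. Applying the excision theorem (Theorem~\ref{Thmexcision}) with $\Sigma=\mathbb{D}^2$ and using $\skcatA(\mathbb{D}^2)\simeq\Proj\cat{A}$, the self-gluing of two disks of $H$ into a handlebody $H'$ gives a canonical isomorphism $\skA(H';X)\cong\int^{P\in\Proj\cat{A}}\skA(H;\dots,P,\dots,P^\vee,\dots)$; cocompleting and moving the coend inside the right exact functor $\SkA(H;-)$ rewrites the right-hand side as $\SkA(H;\dots,\Delta',\dots,\Delta'',\dots)$ with the coevaluation object $\Delta$ from the first paragraph, which is exactly \eqref{eqngluing}. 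The same argument applied to $M=H_1\sqcup H_2$, combined with monoidality, identifies $\SkA(H_1\cup_{\mathbb{D}^2}H_2;-)$ with the composite of $\SkA(H_1;-)$ and $\SkA(H_2;-)$ along $\kappa$. Finally one records that these isomorphisms are natural in the remaining labels and $\Map$-equivariant --- both because the excision map \eqref{eqngluingmap} is assembled from honest gluings of manifolds and is dinatural and $\Diff$-equivariant by construction --- and that they satisfy the associativity and unit coherences of a modular operad algebra; these reduce to the corresponding topological identities (associativity of handlebody gluing, commutation of disjoint gluings, the ``gluing after enlarging'' relation already imposed on $\ARGraphs$) together with the uniqueness clause in the universal property of finite free cocompletion, which forces the cocompleted structure isomorphisms to inherit every identity already holding for their restrictions to projective labels.

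The main obstacle is this last step: excision (Theorem~\ref{Thmexcision}) supplies only pointwise isomorphisms, and the work lies in organizing them into a coherent modular-operad-algebra structure --- naturality, $\Map$-equivariance, and the higher coherences at once --- and, relatedly, in tracking the duality bookkeeping (incoming versus outgoing labels, and the $(-)^\vee$ appearing on one side of each gluing) carefully enough that the coend produced by excision really is the coevaluation $\Delta$ and that $\kappa$ and $\Delta$ satisfy the zig-zag identities. Everything else is either classical skein theory --- in particular the fact that the genus-zero, low-arity admissible skein modules reproduce the cyclic $\framed$-algebra structure on the ribbon category $\cat{A}$ from~\cite{cyclic} --- or a formal consequence of the universal property of finite free cocompletion.
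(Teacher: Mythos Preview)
Your proposal is correct and follows the same overall strategy as the paper: the only non-obvious content is the excision axiom~\eqref{eqngluing}, and you reduce it to Theorem~\ref{Thmexcision} applied with $\Sigma=\mathbb{D}^2$. The paper's proof makes the same reduction but packages it differently. Rather than verifying by hand that the coend produced by excision coincides with insertion of the coevaluation $\Delta$ --- and then checking naturality, $\Map$-equivariance and the higher coherences, which you correctly flag as the main obstacle --- the paper invokes the equivalence of symmetric monoidal bicategories $\catf{Bimod}^\catf{f}\simeq\Rexf$ from~\cite[Proposition~5.6]{sn}. Under this equivalence the uncocompleted functors $\skA(H;-)$ on $\Proj\cat{A}$ correspond to the cocompleted $\SkA(H;-)$, and composition of bimodules via coends corresponds to the operadic composition in $\Rexf$; so the entire modular-algebra structure, coherences included, can be checked on the $\catf{Bimod}^\catf{f}$ side, where the excision axiom is literally the coend statement of Theorem~\ref{Thmexcision}. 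This device resolves at one stroke exactly the coherence bookkeeping you identify as the remaining work; your direct route is perfectly valid but leaves those verifications to be carried out explicitly.
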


\begin{proof}
	This is mostly a straightforward verification, with the only non-obvious point being the excision property~\eqref{eqngluing}
	that we need to play back to Theorem~\ref{Thmexcision}.
	This can be done in a rather indirect way: Let us recall from~\cite[Definition~5.4]{sn}
	the symmetric monoidal bicategory $\catf{Bimod}^\catf{f}$:\begin{itemize}\item  The objects are  $k$-linear categories $\cat{A}$ whose finite free cocompletion is a finite category.
		\item A 1-morphism $\cat{A} \to \cat{B}$ is a linear functor $M: \cat{A}^\opp \otimes \cat{B} \to\vect$ (also called an \emph{$\cat{A}$-$\cat{B}$-bimodule}).
		\item 2-morphisms are linear natural transformations of bimodules.
	\end{itemize}
	There is a well-known canonical equivalence of symmetric monoidal bicategories $\catf{Bimod}^\catf{f}\simeq \Rexf$ (see e.g.\ \cite[Proposition~5.6]{sn})
	under which $\skA(H;-)$ in~\eqref{eqnskHproj} corresponds to $\SkA(H;-)$ in~\eqref{eqnskHext} such that the excision property~\eqref{eqngluing}
	reduces to 
	\begin{align}
		\skA(H';-)\cong \int^{P\in\Proj \cat{A}} \skA(H;\dots,P^\vee,\dots,P,\dots) \ , 
	\end{align}
	which is now a special case of Theorem~\ref{Thmexcision}. This finishes the proof.
\end{proof}

If we restrict the ansular functor from Proposition~\ref{propasmansular} to genus zero,
we obtain the following:
\begin{proposition}\label{propskagenuszero}
	For a finite ribbon category $\cat{A}$, the ribbon Grothendieck-Verdier category obtained by restriction of the ansular functor from 
	Proposition~\ref{propasmansular} built from admissible skein modules is the balanced braided category $\cat{A}$ with $\alpha^{-1}$ as ribbon dualizing object whose Grothendieck-Verdier duality $D=\alpha^{-1}\otimes -^\vee$ is the shift of the rigid duality by $\alpha^{-1}$. 
\end{proposition}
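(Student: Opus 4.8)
The plan is to use the fact that the genus zero part of the modular operad $\Hbdy$ of handlebodies is the cyclic framed $E_2$-operad, so that cyclic algebras over it are precisely ribbon Grothendieck-Verdier categories by the main result of \cite{cyclic} (see also \cite{mwansular}). Restricting the ansular functor of Proposition~\ref{propasmansular} to genus zero therefore produces \emph{some} ribbon Grothendieck-Verdier category $\cat{B}$; by construction of $\SkA$ its underlying linear category is $\cat{A}$, so the task is to identify its balanced braided structure together with its ribbon dualizing object.

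For the balanced braided structure, the plan is to observe that the genus zero operad acts on skein categories through ``disks embedded inside a disk'', and that under the identification $\skcatA(\mathbb{D}^2)\simeq\Proj\cat{A}$ of Remark~\ref{remskeincat}~\ref{remskeincati} this action is read off directly from the Reshetikhin-Turaev graphical calculus on cubes inside $\mathbb{D}^2\times[0,1]$: two parallel unlinked ribbons fuse to the tensor product of their labels, a positive crossing is sent to the braiding $c$, and a full framing twist to the balancing $\theta$. Thus the action recovers the balanced braided structure of $\cat{A}$ on $\Proj\cat{A}$, and hence, after finite free cocompletion, that of $\cat{A}$ itself; this is the classical skein-theoretic picture, compare \cite{cooke,bzbj}. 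Consequently the underlying balanced braided category of $\cat{B}$ is $(\cat{A},c,\theta)$, and by \cite[Theorem~4.2]{mwcenter} its ribbon Grothendieck-Verdier duality is necessarily of the form $D=\beta\otimes-^\vee$ for an object $\beta$ in the balanced Müger center of $\cat{A}$; equivalently its dualizing object is $K=D(I)\cong\beta$. It remains to prove $\beta\cong\alpha^{-1}$.

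To pin down $\beta$, the plan is to compare the duality data of $\cat{B}$ --- the pairing $\kappa=\SkA(\mathbb{D}^2\times[0,1];-,-)$ together with its coevaluation $\Delta\in\cat{A}\boxtimes\cat{A}$ --- against what the admissible skein modules give explicitly. By excision (Theorem~\ref{Thmexcision}) applied to the cylinder, $\Delta$ is the coend $\int^{P\in\Proj\cat{A}}P\boxtimes P^\vee$ over the \emph{projectives}, whereas for a ribbon Grothendieck-Verdier category the coevaluation is the canonical coend over \emph{all} of $\cat{A}$ built from $D$; passing between these two coends of objects introduces exactly a left Nakayama twist, and since $\nakal I\cong\alpha$ by \cite[Lemma~4.10]{fss} --- the very ingredient used in the proof of Theorem~\ref{Thmexcision} --- this twist amounts to shifting the rigid duality by $\alpha^{-1}$, so $\beta\cong\alpha^{-1}$. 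As a cross-check at the level of vector spaces one can evaluate on the three-dimensional ball without disks: its admissible skein module is $\skA(\mathbb{B}^3)\cong\cat{A}(\alpha,I)^*$ by \cite[Corollary~3.2]{asm} and \cite[Theorem~6.4]{shibatashimizu} (again as in the proof of Theorem~\ref{Thmexcision}), while for the rigid choice $\beta\cong I$ the ball value of the corresponding ansular functor is $\cat{A}(I,I)\cong k$ by \cite[Theorem~7.8]{cyclic}; this already rules out $\beta\cong I$ unless $\cat{A}$ is unimodular, and the general case is governed by the same Nakayama twist, forcing $\beta\cong\alpha^{-1}$.

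The main obstacle is the bookkeeping around the finite free cocompletion and the Nakayama functor. Admissible skein modules are native to $\Proj\cat{A}$, where $\cat{A}(I,-)$ and the rigid duality $-^\vee$ behave well, but matching them with the Grothendieck-Verdier and ansular data forces one to track how these extend to all of $\cat{A}$, and it is precisely this extension that produces the $\alpha^{-1}$-shift --- a shift that collapses exactly when $\cat{A}$ is unimodular, i.e.\ $\alpha\cong I$. Beyond this, one must verify that the isomorphisms produced are natural in the objects and compatible with the balanced braided structure identified above, so that the outcome is genuinely the ribbon Grothendieck-Verdier category $\cat{A}_{\alpha^{-1}}$ and not merely an abstractly equivalent one; keeping this coherence under control, rather than any isolated computation, is where the real work sits.
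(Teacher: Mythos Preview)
Your overall strategy matches the paper's: restrict to genus zero, identify the underlying balanced braided category, then invoke \cite[Theorem~4.2]{mwcenter} to reduce the problem to pinning down a single invertible object $\beta$ in the balanced Müger center. The difference lies entirely in how $\beta$ is extracted, and here your coevaluation argument has a genuine gap.

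You assert that for a ribbon Grothendieck--Verdier category the coevaluation is ``the canonical coend over all of $\cat{A}$ built from $D$'', which in context can only mean $\int^{X\in\cat{A}} X\boxtimes DX$. But this object does \emph{not} satisfy the zigzag with the Grothendieck--Verdier pairing $\kappa(X,Y)=\cat{A}(X\otimes Y,K)^*\cong\cat{A}(X,DY)^*$: a direct computation gives
\[
(\kappa\boxtimes\id)\Bigl(Y\boxtimes\int^{X}X\boxtimes DX\Bigr)\cong\int^{X}\cat{A}(Y,DX)^*\otimes DX\cong\nakar Y,
\]
the right Nakayama functor rather than the identity. So the comparison you set up between $\int^{P\in\Proj\cat{A}}P\boxtimes P^\vee$ and $\int^{X\in\cat{A}}X\boxtimes DX$ is not a comparison of two coevaluations for the same kind of pairing, and the conclusion $\beta\cong\alpha^{-1}$ does not follow from it as stated. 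Your cross-check on the ball with no labels is correct but only separates the unimodular case from the non-unimodular one; it does not determine $\beta$.

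The paper avoids this by working with the \emph{pairing} (equivalently, the genus-zero conformal blocks with $n\ge 1$ insertions) rather than the coevaluation. Starting from $\skA(B;P_1,\dots,P_n)\cong\cat{A}(I,P_1\otimes\dots\otimes P_n)$ on projectives, a short chain of Nakayama identities from \cite{fss,shibatashimizu} rewrites this as $\cat{A}(P_1\otimes\dots\otimes P_n,\alpha^{-1})^*$; since the latter is already right exact in each variable, it \emph{is} the finite free cocompletion, so $\SkA(B;X_1,\dots,X_n)\cong\cat{A}(X_1\otimes\dots\otimes X_n,\alpha^{-1})^*$. Comparing with the general $\Rexf$-formula for conformal blocks from \cite[Corollary~8.1]{brochierwoike} then reads off the balanced braided structure \emph{and} the dualizing object $K=\alpha^{-1}$ in one stroke. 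This also dissolves the coherence worries you raise in your last paragraph: there is no separate RT-calculus argument for the braiding and balancing to be checked for compatibility, because everything is encoded in a single right exact functor that one matches against a known template.
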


We denote this ribbon Grothendieck-Verdier category by $\cat{A}_{\alpha^{-1}}$. 

\begin{proof}Let us calculate the genus zero restriction of the ansular functor from Proposition~\ref{propasmansular}:
	For a three-dimensional ball with $n\ge 1$ disks embedded in its boundary labeled by projective objects $P_1,\dots,P_n$, we find
	\begin{align}
		\skA(B;P_1,\dots,P_n)&\cong \cat{A}(I,P_1\otimes \dots \otimes P_n) \qquad \text{(Remark~\ref{remasmprop}~\ref{remasmpropi})} \\
		&\cong 	\cat{A}(I,\nakar \nakal( P_1\otimes \dots \otimes P_n)) \qquad \text{\cite[Corollary 4.7]{fss}} \\
		& \qquad\qquad  \text{with the left / right Nakayama functor~\cite{fss}} \\
		& \qquad \qquad \nakal = \int_{X \in \cat{A}} \cat{A}(X,-) \otimes X\ , \quad \nakar = \int^{X \in \cat{A}} \cat{A}(-,X)^* \otimes X \\
		&\cong \cat{A}(\nakal ( P_1\otimes \dots \otimes P_n),I )^* \qquad \text{(\cite[Section~3]{shibatashimizu} \& \cite[Section~2]{tracesw})}\\
		&\cong \cat{A}(  P_1\otimes \dots \otimes P_n,\nakar I )^*
		\qquad \text{(\cite[Lemma 3.16]{fss})}
		\\ &\cong 
		\cat{A}(  P_1\otimes \dots \otimes P_n,\alpha^{-1} )^*
		\qquad \text{(\cite[Lemma 4.10 \& 4.11]{fss})}
		\ . 
	\end{align}
	Since $\SkA(B;-)$ is the unique right exact extension of this functor, we find
	\begin{align}
		\SkA(B;X_1,\dots,X_n)\cong \cat{A}(  X_1\otimes \dots \otimes X_n,\alpha^{-1} )^*  \label{eqnSkAgenusnull}
	\end{align}
for $X_1,\dots,X_n\in\cat{A}$.
	With the description of spaces of conformal blocks for ansular functors, which is given in \cite[Theorem~7.8]{cyclic} and \cite[Corollary~6.3]{mwansular} in the $\Lexf$-valued case and adapted in~\cite[Corollary~8.1]{brochierwoike} to the $\Rexf$-valued case, we deduce that the underlying non-cyclic framed $E_2$-algebra of the genus zero restriction of $\SkA$ is the balanced braided category $\cat{A}$.
	It remains to identify the cyclic structure. In~\cite[Theorem~4.2]{mwcenter} all possible ribbon
	Grothendieck-Verdier dualities on a balanced braided category $\cat{A}$ are classified: If $\cat{A}$ is rigid (as in the case at hand), then the rigid duality is a ribbon Grothendieck-Verdier duality, and all other possible
	ribbon Grothendieck-Verdier dualities are given by $D=\beta \otimes -^{\vee}$, where $\beta$ is an invertible object in the balanced Müger center 
	\begin{align}
		Z_2^\catf{bal}(\cat{A}) := \left\{ X \in \cat{A} \  | \ 
		c_{Y,X}c_{X,Y}=\id_{X\otimes Y}\ \forall \ Y \in \cat{A} , \ 
		\theta_X=\id_X \right\}
	\end{align}
	of $\cat{A}$. In other words, the possible ribbon Grothendieck-Verdier dualities form a torsor over the Picard group of $Z_2^\catf{bal}(\cat{A})$.
	We conclude that the cyclic framed $E_2$-structure on the genus zero restriction of $\SkA$ is determined by knowing the underlying balanced braided category (which is $\cat{A}$) and the invertible object in $Z_2^\catf{bal}(\cat{A})$ that the rigid duality needs to be twisted by. This object is $\alpha^{-1}$ as we see easily with~\eqref{eqnSkAgenusnull} and \cite[Corollary~8.1]{brochierwoike}. 
\end{proof}

\begin{remark}
	Logically, we do not need to prove that $\alpha$ or equivalently $\alpha^{-1}$ actually lie in $Z_2^\catf{bal}(\cat{A})$.
	If $\SkA$ is an ansular functor, then this \emph{must}
	 be true by \cite[Theorem~4.2]{mwcenter}.
	As a sanity check, let us also briefly see in a purely algebraic way why this is true:
	By~\cite[Corollary~8.10.8]{egno} the distinguished invertible object $\alpha$ trivially double braids with all objects. 
	Moreover, by \cite[Lemma 4.10 \& 4.11]{fss}
	$\alpha \cong \nakal I=\int_{X \in \cat{A}} \cat{A}(X,I)\otimes X$, which by naturality of $\theta$ and $\theta_I=\id_I$ implies $\theta_\alpha = \id_\alpha$.
\end{remark}

We can now state and prove our main result:
\begin{theorem}[Comparison result]\label{thmahatasm}
	Let $\cat{A}$ be a finite ribbon category and $H$ a handlebody with $n\ge 0$ disks embedded in its boundary surface. Then there is a canonical natural $\Map(H)$-equivariant isomorphism of functors
	\begin{align}
		\widehat{\cat{A}}_{\alpha^{-1}}(H;-) \ra{\cong} \SkA(H;-) : \cat{A}^{\boxtimes n} \to \vect
	\end{align}
	between the value of the modular extension $\widehat{\cat{A}}_{\alpha^{-1}}$ of the balanced braided category 
	$\cat{A}$ with Grothendieck-Verdier duality $D=\alpha^{-1}\otimes -^\vee$  on $H$ and the cocompleted
	admissible skein module $\SkA(H;-)$. 
	In the case $n=0$ (no embedded disks), the completion step for admissible skein modules is not needed, and we obtain a $\Map(H)$-equivariant isomorphism
	\begin{align}
		\widehat{\cat{A}}_{\alpha^{-1}}(H) \ra{\cong} \skA(H) 
	\end{align}
	of vector spaces
	for the uncompleted admissible skein modules $\skA(H) $.
\end{theorem}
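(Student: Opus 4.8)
The plan is to obtain the theorem as a formal consequence of Propositions~\ref{propasmansular} and~\ref{propskagenuszero} together with the \emph{uniqueness} of the modular extension from \cite{cyclic,mwansular}. Recall that a ribbon Grothendieck-Verdier category --- equivalently, the corresponding cyclic framed $E_2$-algebra --- extends uniquely up to canonical equivalence to an ansular functor: by \cite{costello,giansiracusa} the modular envelope of the cyclic framed $E_2$-operad is the modular operad of surfaces, with the analogous statement for the handlebody modular operad $\Hbdy$, so that restriction to genus zero is an equivalence of bicategories between $\Rexf$-valued ansular functors and ribbon Grothendieck-Verdier categories (in the $\Rexf$-valued form we need here, via the description of spaces of conformal blocks in \cite{brochierwoike}). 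It therefore suffices to identify $\SkA$ as an ansular functor whose genus zero restriction is $\cat{A}_{\alpha^{-1}}$, and to recall that $\widehat{\cat{A}}_{\alpha^{-1}}$ is by construction the ansular functor with exactly this genus zero restriction.

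Both ingredients are already available. By Proposition~\ref{propasmansular} the freely cocompleted admissible skein modules $\SkA(H;-):\cat{A}^{\boxtimes n}\to\vect$ form an $\Rexf$-valued ansular functor $\SkA$ --- this is where the excision Theorem~\ref{Thmexcision} and the skein-categorical results of \cite{brownhaioun,rst} enter. By Proposition~\ref{propskagenuszero} the genus zero restriction of $\SkA$, with its full cyclic structure, is $\cat{A}_{\alpha^{-1}}$: the balanced braided category $\cat{A}$ with ribbon dualizing object $\alpha^{-1}$ and Grothendieck-Verdier duality $D=\alpha^{-1}\otimes-^\vee$. Applying uniqueness of the modular extension to $\SkA$ and $\widehat{\cat{A}}_{\alpha^{-1}}$, which share this genus zero restriction, produces a canonical equivalence $\widehat{\cat{A}}_{\alpha^{-1}}\simeq\SkA$ of ansular functors, hence, evaluating on a handlebody $H$ with $n\ge0$ embedded disks, a natural isomorphism $\widehat{\cat{A}}_{\alpha^{-1}}(H;-)\ra{\cong}\SkA(H;-)$ of right exact functors $\cat{A}^{\boxtimes n}\to\vect$. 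Equivariance is automatic: a morphism of ansular functors is compatible with all of $\Hbdy$, and the $\Map(H)$-action on the value at $H$ is induced by the automorphisms of $H$ inside $\Hbdy$. For $n=0$ the cocompletion step is vacuous, since $\SkA(H;-):\vect\to\vect$ is the right exact functor with $\SkA(H;k)=\skA(H)$, so evaluating at $k$ returns the vector space $\skA(H)$ itself and we obtain the $\Map(H)$-equivariant isomorphism $\widehat{\cat{A}}_{\alpha^{-1}}(H)\ra{\cong}\skA(H)$ of vector spaces.

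The main difficulty is not in this final assembly, which is formal once its inputs are granted, but in the two preparatory propositions: verifying that the self-gluing behaviour of admissible skein modules really matches the coevaluation-insertion axiom~\eqref{eqngluing} of an ansular functor, including the required coherence --- for which the non-trivial excision Theorem~\ref{Thmexcision} is essential --- and computing that the genus zero restriction carries the \emph{shifted} duality $D=\alpha^{-1}\otimes-^\vee$ rather than the naive rigid one, which is precisely the `background charge' that makes the unshifted comparison false. One should also take care that the equivalence $\widehat{\cat{A}}_{\alpha^{-1}}\simeq\SkA$ is genuinely canonical and not merely an abstract existence statement; this is traced back through the explicit genus zero identification of Proposition~\ref{propskagenuszero} together with \cite[Corollary~8.1]{brochierwoike}.
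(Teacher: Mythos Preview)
Your proposal is correct and follows essentially the same route as the paper: the paper deduces Theorem~\ref{thmahatasm} from Theorem~\ref{Thmcomparisionansular}, whose proof is exactly the argument you give --- combine Proposition~\ref{propasmansular} (skein modules form an ansular functor), Proposition~\ref{propskagenuszero} (its genus zero restriction is $\cat{A}_{\alpha^{-1}}$), and the fact from \cite[Theorem~5.7 \& 5.9]{mwansular} that ansular functors are determined up to canonical equivalence by their genus zero restriction. Your added remarks on why $\Map(H)$-equivariance is automatic and why the $n=0$ case needs no cocompletion are correct elaborations of points the paper leaves implicit.
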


The above result follows directly from the next result that uses slightly more technical language that for the statement of Theorem~\ref{thmahatasm} we want to avoid:

\begin{theorem}\label{Thmcomparisionansular}
	Let $\cat{A}$ be a finite ribbon category.
	Then the following constructions agree, up to canonical equivalence,
	as 
	$\Rexf$-valued ansular functors:
	\begin{pnum}
		\item The modular extension $\widehat{\cat{A}}_{\alpha^{-1}}$ of $\cat{A}_{\alpha^{-1}}$ seen as $\Rexf$-valued modular $\Hbdy$-algebra, i.e.\ the ansular functor associated to $\cat{A}_{\alpha^{-1}}$.
		\item The admissible skein modules for $\cat{A}$, seen an 
		$\Rexf$-valued ansular functor after a finite free cocompletion. 
	\end{pnum}
\end{theorem}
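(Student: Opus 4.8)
The plan is to prove Theorem~\ref{Thmcomparisionansular}, from which Theorem~\ref{thmahatasm} follows by spelling out the definitions. The key structural fact we will exploit is a \emph{uniqueness} statement for ansular functors: by the results of~\cite{costello,giansiracusa} as used in~\cite{mwansular}, a $\Rexf$-valued modular $\Hbdy$-algebra is determined, up to canonical equivalence, by its genus zero restriction, i.e.\ by the underlying cyclic framed $E_2$-algebra (equivalently, the ribbon Grothendieck-Verdier category). In other words, the modular envelope of the cyclic operad of genus zero surfaces computes the modular operad $\Hbdy$ of handlebodies, so an ansular functor is the \emph{left Kan extension} of its genus zero part along the inclusion of the cyclic operad into the modular operad. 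Therefore, to show that two ansular functors agree, it suffices to show that (a) their genus zero restrictions agree as ribbon Grothendieck-Verdier categories, and (b) the construction in question genuinely satisfies the ansular (modular) gluing axioms, so that it \emph{is} the modular extension of its genus zero part rather than merely some other consistent system.

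First I would establish (b): that the freely cocompleted admissible skein modules $\SkA(H;-)$ form a $\Rexf$-valued ansular functor. This is exactly Proposition~\ref{propasmansular}, which reduces the excision/gluing axiom~\eqref{eqngluing} to the coend formula $\skA(H';-) \cong \int^{P\in\Proj\cat{A}} \skA(H;\dots,P^\vee,\dots,P,\dots)$, itself a special case of the excision theorem (Theorem~\ref{Thmexcision}) applied to self-gluing along an embedded disk in the boundary of a handlebody. The handlebody group equivariance and the monoidal compatibility are the routine verifications alluded to there. Next I would establish (a): that the genus zero restriction of this ansular functor is $\cat{A}_{\alpha^{-1}}$, the balanced braided category $\cat{A}$ equipped with the $\alpha^{-1}$-shifted duality $D = \alpha^{-1}\otimes -^\vee$. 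This is Proposition~\ref{propskagenuszero}: one computes $\SkA(B;X_1,\dots,X_n)\cong\cat{A}(X_1\otimes\dots\otimes X_n,\alpha^{-1})^*$ using Remark~\ref{remasmprop}~\ref{remasmpropi}, the Nakayama functor identities of~\cite{fss}, and the identification of spaces of conformal blocks for ansular functors via the modular envelope~\cite{cyclic,mwansular,brochierwoike}; then the classification~\cite[Theorem~4.2]{mwcenter} of ribbon Grothendieck-Verdier structures on a fixed rigid balanced braided category as a torsor over $\operatorname{Pic}(Z_2^{\catf{bal}}(\cat{A}))$ pins down the cyclic structure as the $\alpha^{-1}$-shift.

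With (a) and (b) in hand, the argument concludes as follows. The modular extension $\widehat{\cat{A}}_{\alpha^{-1}}$ is, by construction~\cite{mwansular}, the unique $\Rexf$-valued ansular functor whose genus zero restriction is $\cat{A}_{\alpha^{-1}}$. The cocompleted admissible skein modules form an ansular functor by (b), and its genus zero restriction is $\cat{A}_{\alpha^{-1}}$ by (a). Hence, by the uniqueness of the modular extension, there is a canonical equivalence between them as ansular functors, which gives precisely Theorem~\ref{Thmcomparisionansular}; evaluating on a handlebody $H$ with $n$ embedded disks, and noting that for $n=0$ no cocompletion is involved, yields the $\Map(H)$-equivariant natural isomorphisms of Theorem~\ref{thmahatasm}.

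**Main obstacle.** The delicate point is not the formal Kan-extension bookkeeping but verifying that the admissible skein construction really is \emph{the} modular extension — i.e., that the excision isomorphisms coming from Theorem~\ref{Thmexcision} are coherent in the sense required by the modular operad $\Hbdy$, matching the structure maps that the universal property of the modular envelope produces. Concretely, one must check that the isomorphism in~\eqref{eqngluing} built from gluing ribbon graphs is compatible with the handlebody group action and with iterated gluings (the associativity/equivariance coherences of a modular algebra), so that Costello--Giansiracusa uniqueness genuinely applies and the resulting equivalence is canonical rather than merely objectwise. This is where the careful chasing of identifications — tying the Reshetikhin--Turaev graphical gluing to the abstract coend formula — has to be done; everything else is either cited (the uniqueness of modular extensions, the description of conformal blocks, the Nakayama/$\alpha$ computations) or a direct verification.
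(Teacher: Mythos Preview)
Your proposal is correct and follows essentially the same approach as the paper: establish that the cocompleted admissible skein modules form an ansular functor (Proposition~\ref{propasmansular}), compute its genus zero restriction to be $\cat{A}_{\alpha^{-1}}$ (Proposition~\ref{propskagenuszero}), and then invoke the uniqueness of ansular functors with a given genus zero restriction from \cite[Theorem~5.7~\&~5.9]{mwansular}. Your ``main obstacle'' about coherence of the excision isomorphisms is a fair point of caution, but the paper treats this as part of the ``mostly straightforward verification'' in Proposition~\ref{propasmansular} rather than as a substantive difficulty.
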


\begin{proof}
	We need to compare $\widehat{\cat{A}}_{\alpha^{-1}}$ with the ansular functor $\SkA$
	built in Proposition~\ref{propasmansular} from admissible skein modules and finite free cocompletion.
	By \cite[Theorem 5.7 \& 5.9]{mwansular} two ansular functors are equivalent if and only if their genus zero restrictions are equivalent as cyclic framed $E_2$-algebras, i.e.\ as ribbon Grothendieck-Verdier categories. 
	As the genus zero restriction of $\widehat{\cat{A}}_{\alpha^{-1}}$, we obtain $\cat{A}_{\alpha^{-1}}$ by construction.
	For $\SkA$, the result is the same by Proposition~\ref{propskagenuszero}.
	This finishes the proof.
\end{proof}

\begin{corollary}
	Let $\Ca$ be a finite ribbon category and $H_{g,n}$ a genus $g$ 
	handlebody with $n\ge 0$ embedded boundary disks with 
	boundary labels $X_1,\dots,X_n$. The vector space of admissible skeins with these boundary labels is, after finite free cocompletion, given by
	\begin{align}
		\SkA(H_{g,n};X_1,\dots,X_n) \cong \Ca (X_1\otimes \dots X_n \otimes \mathbb{A}^{\otimes g},\alpha^{-1})^* \ \ ,\label{eqnbackgroundc}
	\end{align} 
	where $\mathbb{A}=\int_{X\in \Ca} X^\vee \otimes X \in \Ca $ is the canonical end of $\Ca$ and $\alpha^{-1}$ the inverse of the distinguished invertible object. 
\end{corollary}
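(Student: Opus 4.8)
The plan is to reduce the statement to the genus-zero value behind Proposition~\ref{propskagenuszero} by iterating the excision isomorphism of Theorem~\ref{Thmexcision}. First I would realize the genus $g$ handlebody $H_{g,n}$ as obtained from the three-dimensional ball $H_{0,n+2g}$, carrying $n+2g$ embedded boundary disks, by gluing together $g$ disjoint pairs of these disks, each pair-gluing creating one handle. Each such pair-gluing is an instance of Theorem~\ref{Thmexcision} for the glued surface being a two-dimensional disk $D$, for which $\skcatA(D)\simeq\Proj\cat{A}$ by Remark~\ref{remskeincat}. Iterating $g$ times, and passing from $\skA$ to the freely cocompleted $\SkA$ exactly as in the proof of Proposition~\ref{propasmansular} so as to allow arbitrary labels $X_i\in\cat{A}$, I obtain a canonical isomorphism
\begin{align*}
\SkA(H_{g,n};X_1,\dots,X_n)\ \cong\ \int^{P_1,\dots,P_g\in\Proj\cat{A}}\SkA\bigl(H_{0,n+2g};X_1,\dots,X_n,P_1,P_1^\vee,\dots,P_g,P_g^\vee\bigr)\ ,
\end{align*}
where I arrange the embedded disks so that $P_j$ and $P_j^\vee$ occupy adjacent slots.

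Next I would insert the genus-zero value~\eqref{eqnSkAgenusnull}, that is $\SkA(H_{0,m};Y_1,\dots,Y_m)\cong\cat{A}(Y_1\otimes\dots\otimes Y_m,\alpha^{-1})^*$, which holds for arbitrary $Y_i\in\cat{A}$ after cocompletion. Since the functor $F\coloneqq\cat{A}(-,\alpha^{-1})^*\colon\cat{A}\to\vect$ is right exact --- just as the functor $\cat{A}(-,I)^*$ used in step~\ref{excisionstep2} of the proof of Theorem~\ref{Thmexcision} --- it commutes with the colimit defining the coend, and so does $-\otimes-$; the right-hand side therefore equals
\begin{align*}
\cat{A}\Bigl(X_1\otimes\dots\otimes X_n\otimes\bigl(\,\textstyle\int^{P\in\Proj\cat{A}}P\otimes P^\vee\bigr)^{\otimes g},\ \alpha^{-1}\Bigr)^*\ .
\end{align*}
The coend $\int^{P\in\Proj\cat{A}}P\otimes P^\vee$ is unchanged if $\Proj\cat{A}$ is replaced by all of $\cat{A}$, by the argument via~\cite[Proposition~5.1.7]{kl} already used in step~\ref{excisionstep2}, so the remaining task is to identify this object --- up to the duality and pivotal bookkeeping that also converts $X\otimes X^\vee$ into $X^\vee\otimes X$ --- with the canonical end $\mathbb{A}=\int_{X\in\cat{A}}X^\vee\otimes X$. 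Assembling the $g$ factors then yields~\eqref{eqnbackgroundc}.

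The same result can be reached more conceptually through the comparison Theorem~\ref{thmahatasm}, which identifies $\SkA(H_{g,n};-)$ with $\widehat{\cat{A}}_{\alpha^{-1}}(H_{g,n};-)$: the value of the ansular functor of a ribbon Grothendieck--Verdier category with dualizing object $K$ on a genus $g$ handlebody is, by the Lyubashenko-type description of its spaces of conformal blocks from~\cite{cyclic,mwansular,brochierwoike} (which itself is obtained from the genus-zero value together with the gluing rule~\eqref{eqngluing} of ansular functors), one copy of the canonical end per handle, evaluated against $K$; here $K=\alpha^{-1}$ by Proposition~\ref{propskagenuszero}. I would use this route as a cross-check, since there the per-handle object is by construction the coevaluation of the pairing and the only shift present is the global dualizing object.

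\emph{Main obstacle.} In either route the delicate point is making sure that the shift in the target stays a single copy of $\alpha^{-1}$ --- independent of the genus $g$ --- and that the per-handle object is exactly the canonical end $\mathbb{A}=\int_X X^\vee\otimes X$ rather than the canonical coend $\int^X X\otimes X^\vee$ or an $\alpha$-twisted variant. Pinning this down requires carefully tracking right and left duals, the pivotal isomorphism $X^{\vee\vee}\cong X$, the $\alpha$-shifted Grothendieck--Verdier coevaluation (which involves a Nakayama twist), and the Radford-type isomorphism relating the canonical coend of $\cat{A}$ to its canonical end; concretely this amounts to a diagram chase through the identifications already made in the proof of Proposition~\ref{propskagenuszero} and in step~\ref{excisionstep2} of the proof of Theorem~\ref{Thmexcision}.
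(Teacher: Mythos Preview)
Your second route---apply Theorem~\ref{thmahatasm} to identify $\SkA(H_{g,n};-)\cong\widehat{\cat{A}}_{\alpha^{-1}}(H_{g,n};-)$ and then read off the value from the conformal block formulae in \cite{cyclic,mwansular,brochierwoike}---\emph{is} the paper's proof, in its entirety. The paper does not do the direct excision computation you outline first; it simply cites those formulae.

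Your primary (direct) route is a legitimate alternative, but it amounts to reproving, in this special case, the content of \cite[Corollary~8.1]{brochierwoike}: starting from the genus-zero value and the gluing rule~\eqref{eqngluing} and arriving at one copy of $\mathbb{A}$ per handle with a single global $\alpha^{-1}$. The obstacle you flag is genuine and is \emph{not} mere pivotal bookkeeping: passing from the excision coend $\int^{P\in\Proj\cat{A}}P\otimes P^\vee$ to the canonical \emph{end} $\mathbb{A}=\int_{X}X^\vee\otimes X$ inside $\cat{A}(-,\alpha^{-1})^*$ requires the Nakayama identifications (as in the chain of isomorphisms in the proof of Proposition~\ref{propskagenuszero}), and it is exactly there that one checks no extra $\alpha$-shift appears per handle. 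So the direct route works, but the bookkeeping you would have to do is precisely what the cited references already package; the paper's approach buys you that for free once Theorem~\ref{thmahatasm} is in hand. If you want to present the direct argument, promote your ``cross-check'' to the main proof and keep the excision computation only as an illustration.
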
 

\begin{proof}
	This  directly follows from Theorem~\ref{thmahatasm} if we compare
	\cite[Theorem~7.8]{cyclic} and \cite[Corollary~6.3]{mwansular}
	(where the formulae are given in the $\Lexf$-valued case; see \cite[Corollary~8.1]{brochierwoike} for the $\Rexf$-valued case).
\end{proof}

\begin{remark}[The distinguished invertible object as a background charge]\label{rembcharge}
	The vector space $\Ca (X_1\otimes \dots X_n \otimes \mathbb{A}^{\otimes g},\alpha^{-1})^*$
	on the right hand side of~\eqref{eqnbackgroundc} is isomorphic
	to $\Ca (\alpha\otimes X_1\otimes \dots X_n \otimes \mathbb{A}^{\otimes g},I)^*$. In other words, it is the `usual' space of conformal blocks that we expect, but with $\alpha$ as an additional boundary label.
	This is, in different contexts, a well-known phenomenon referred to as \emph{background charge} in~\cite[Remark~7.2]{Non-s-string-nets}, see also	\cite[Remark 8.9]{sn}.
	Theorem~\ref{thmahatasm} tells us that for the comparison we need to add the background charge $\alpha^{-1}$ to the ansular functor or, equivalently the background charge $\alpha$ to the admissible skeins.
\end{remark}

We turn now to the comparison of the skein \emph{module structure}. As was already explained in the introduction, this requires the following statement:

\begin{theorem}\label{thmgenskeinmodule}
	For a unimodular finite ribbon category $\cat{A}$ and a handlebody $H$ with $n\ge 0$ embedded disks in its boundary surface $\Sigma$, the generalized handlebody skein module
	$\PhiA(H):\int_\Sigma \cat{A} \to \cat{A}^{\boxtimes n}$ agrees, under the equivalence~\eqref{eqnequivfhskein} between factorization homology and the freely cocompleted skein category,
	with the map $\SkA(H;-) : \SkCatA(\Sigma) \to \cat{A}^{\boxtimes n}$
	as a module map over $\int_{\partial \Sigma \times [0,1]} \cat{A}$. 
\end{theorem}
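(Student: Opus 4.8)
The plan is to upgrade the bare equivalence of functors provided by Theorem~\ref{thmahatasm} (for $n$ replaced by $n+2$ slots, say, keeping track of two extra boundary disks) to an equivalence of module maps over the skein algebra category $\int_{\partial\Sigma\times[0,1]}\cat{A}$. First I would recall that both sides are, by construction, modules over $\int_{\partial\Sigma\times[0,1]}\cat{A}$: for $\PhiA(H)$ this is part of the output of \cite[Section~4]{brochierwoike}, coming from the action of annular factorization homology on the factorization homology of $\Sigma$ via the collar; for $\SkA(H;-)$ it is the action by gluing cylinders $\partial\Sigma\times[0,1]\times[0,1]$ onto the collar of $\partial\Sigma$, translated through the equivalence~\eqref{eqnequivfhskein} of \cite[Theorem~3.10]{brownhaioun}. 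The unimodularity hypothesis is what makes this comparison clean: by Proposition~\ref{propskagenuszero} the genus zero restriction of $\SkA$ uses the $\alpha^{-1}$-shifted duality, which is the rigid one precisely when $\alpha\cong I$, and only then does $\SkA$ coincide on the nose with $\widehat{\cat{A}}$ and hence with the generalized skein module $\PhiA$ built from the honest rigid Grothendieck-Verdier structure. So in the unimodular case the background charge of Remark~\ref{rembcharge} is trivial and $\widehat{\cat{A}}_{\alpha^{-1}}=\widehat{\cat{A}}$, putting both sides genuinely on the same footing.

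Next I would make the module structures both visible through the \emph{same} topological mechanism, namely the modular-operad structure on handlebodies with a parametrized boundary collar. Concretely, attaching an element of the skein algebra is gluing a thickened annulus $\partial\Sigma\times[0,1]\times[0,1]$ to $H$ along a sub-collar of $\Sigma=\partial H$; this is an operation inside the modular operad $\Hbdy$. Both $\widehat{\cat{A}}$ (via the ansular-functor axioms, i.e.\ compatibility with operadic composition) and $\SkA$ (via Proposition~\ref{propasmansular}, which shows the admissible skein modules form an ansular functor with the same operadic structure maps, reduced to Theorem~\ref{Thmexcision}) are algebras over this operad, and Theorem~\ref{thmahatasm}/\ref{Thmcomparisionansular} gives an isomorphism of ansular functors, hence an isomorphism compatible with \emph{all} operadic structure maps — in particular with the ones implementing the annular gluing. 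The key step is then to identify the module action of $\int_{\partial\Sigma\times[0,1]}\cat{A}$ used in \cite{brochierwoike} and \cite{brownhaioun} with this operadic annular-gluing action; for the factorization-homology side this is essentially the definition of excision/collar-gluing in factorization homology, and for the skein side it is the definition of composition in the skein category together with~\eqref{eqnequivfhskein} being a \emph{monoidal} equivalence intertwining the collar actions (which is part of \cite[Theorem~3.10]{brownhaioun}). Having done this, the isomorphism of Theorem~\ref{thmahatasm} is automatically a module map because it respects operadic composition.

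The main obstacle I expect is precisely the bookkeeping of this last identification: matching the module structure on $\PhiA(H)$, which arises from factorization homology of the open surface $\Sigma$ with its collar and the $E_1$-algebra object $\int_{\partial\Sigma\times[0,1]}\cat{A}$ acting by collar insertion, against the operadic annular composition in $\Hbdy$ that governs both ansular functors — these are a priori described in different languages (factorization homology of stratified spaces versus modular operads of handlebodies), and one has to check they are the \emph{same} map and not merely abstractly isomorphic actions. I would handle this by passing through the skein category: \cite[Theorem~3.10]{brownhaioun} already identifies $\int_\Sigma\cat{A}$ with $\SkCatA(\Sigma)$ compatibly with the collar/boundary actions, so it suffices to check that the ansular-functor operadic composition, when restricted to annular gluings, reproduces the cylinder-gluing action on admissible skein modules — and that is exactly the content of (the proof of) Proposition~\ref{propasmansular} together with the functoriality of $\skA$ under gluing from Remark~\ref{remskeincat}~(ii). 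The remaining verification that the comparison isomorphism is $\Map(H)$-equivariant is already recorded in Theorem~\ref{thmahatasm}, so no extra work is needed there.
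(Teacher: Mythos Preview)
Your approach is workable but takes a noticeably more indirect route than the paper. The paper's proof is almost immediate once one recalls how $\PhiA(H)$ is \emph{defined}: since $\int_\Sigma\cat{A}$ is a colimit over disk embeddings $\varphi:(\mathbb{D}^2)^{\sqcup m}\hookrightarrow\Sigma$, a right exact functor out of it is pinned down by its precompositions with the structure maps $\varphi_*:\cat{A}^{\boxtimes m}\to\int_\Sigma\cat{A}$, and by construction $\PhiA(H)\circ\varphi_*=\widehat{\cat{A}}(H^\varphi)$ where $H^\varphi$ is $H$ with $m$ additional incoming disks at the locations prescribed by $\varphi$. Unimodularity and Theorem~\ref{thmahatasm} give $\widehat{\cat{A}}(H^\varphi)\cong\SkA(H^\varphi)$ for every such $\varphi$, and these are precisely the functors that induce $\SkA(H;-)$ under the equivalence~\eqref{eqnequivfhskein} by \cite[Section~2.3 \& Theorem~3.10]{brownhaioun}. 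The module-map statement is then obtained by running the same comparison through \cite[Proposition~4.3]{brochierwoike}.

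Your plan instead packages the module action as an instance of $\Hbdy$-operadic composition (annular gluing) and invokes Theorem~\ref{Thmcomparisionansular} to get compatibility with all operadic structure at once. That is conceptually appealing and does explain \emph{why} the module structures must match, but it imports exactly the ``main obstacle'' you flag: one still has to check that the $\int_{\partial\Sigma\times[0,1]}\cat{A}$-action on $\PhiA(H)$ from \cite{brochierwoike} really is the operadic annular gluing, and unwinding that identification essentially reproduces the colimit/disk-embedding argument anyway. There is also a small gap in your write-up: Theorem~\ref{thmahatasm} gives a comparison of functors $\cat{A}^{\boxtimes n}\to\vect$ for each fixed handlebody, not yet a comparison of functors out of $\int_\Sigma\cat{A}\simeq\SkCatA(\Sigma)$; you never say how to pass from the former to the latter (your ``$n+2$ slots'' remark hints at varying the disks but does not explain why ranging over \emph{all} disk embeddings determines the functor). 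The paper's route avoids both issues by working directly with the universal property that simultaneously underlies $\PhiA$ and the equivalence~\eqref{eqnequivfhskein}, so no extra translation layer is needed.
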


\begin{proof}
	Let $\varphi : (\mathbb{D}^2)^{\sqcup m} \to \Sigma$ 
	be an embedding of $m\ge 0$ disks into $\Sigma$ and $\varphi _ * : \cat{A}^{\boxtimes m}\to \int_\Sigma \cat{A}$ the associated structure map that factorization homology, by its definition as a colimit, comes equipped with.
	The map $\PhiA(H): \int_\Sigma \cat{A}\to\cat{A}^{\boxtimes n}$ is determined by $\PhiA(H)\varphi_* = \widehat{\cat{A}}(H^\varphi)$, where $H^\varphi$ is the handlebody $H$, but with all $n$ embedded disks seen as outgoing and $m$ incoming disks added as prescribed by the embedding $\varphi$. 
	With unimodularity and Theorem~\ref{thmahatasm}, we find
	$\widehat{\cat{A}}(H^\varphi)\cong \SkA(H^\varphi)$. 
	But the maps  $\SkA(H^\varphi)$ are the exactly the ones inducing $\SkA(H):\SkCatA(\Sigma)\to\cat{A}^{\boxtimes n}$ under the equivalence $\SkCatA(\Sigma)\simeq \int_\Sigma \cat{A}$, see \cite[Section~2.3 \& Theorem~3.10]{brownhaioun}.
	The statement of the maps agreeing as module maps follows if we extend the comparison just made to the construction in \cite[Proposition 4.3]{brochierwoike}.
\end{proof}

Since $\PhiA(H)\cat{O}_\Sigma\cong \widehat{\cat{A}}(H)$~\cite[Theorem~4.2]{brochierwoike}, $\widehat{\cat{A}}(H)$ carries an action of
$\SkAlg_\cat{A}(\Sigma)=\End_{\int_\Sigma \cat{A}}(\cat{O}_\Sigma)$.
On the other hand, the skein module $\skA(H)$ carries by~\cite[Proposition 2.5]{asm} an action of a generally non-unital version of the skein algebra that extends, along an non-unital algebra map to $\SkAlg_\cat{A}(\Sigma)$, to a $\SkAlg_\cat{A}(\Sigma)$-action on $\SkA(H)$~\cite[Section~2.2 \& Remark 2.19]{brownhaioun}. Theorem~\ref{thmgenskeinmodule} implies:

\begin{corollary}\label{corskeinmodules}
	For a unimodular finite ribbon category $\cat{A}$, and any handlebody $H$, there is an isomorphism $\widehat{\cat{A}}(H)\cong \SkA(H)$ of skein modules. 
\end{corollary}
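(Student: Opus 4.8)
The plan is to obtain the corollary by specialising Theorem~\ref{thmgenskeinmodule} to the quantum structure sheaf and checking that the resulting isomorphism respects the two skein algebra actions. First I would record that unimodularity trivialises the background charge appearing in Theorem~\ref{thmahatasm}: since $\cat{A}$ is unimodular we have $\alpha\cong I$, hence the $\alpha^{-1}$-shifted ribbon Grothendieck--Verdier category $\cat{A}_{\alpha^{-1}}$ is just $\cat{A}$ with its rigid duality, and therefore $\widehat{\cat{A}}_{\alpha^{-1}}=\widehat{\cat{A}}$. Thus the case $n=0$ of Theorem~\ref{thmahatasm} already delivers a $\Map(H)$-equivariant isomorphism of vector spaces $\widehat{\cat{A}}(H)\ra{\cong}\SkA(H)$ (and no completion is needed here, so $\SkA(H)=\skA(H)$); what remains is to see that it intertwines the $\SkAlg_\cat{A}(\Sigma)$-actions, where $\Sigma=\partial H$.

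Next I would make the two module structures visible as instances of functoriality. Writing $\SkAlg_\cat{A}(\Sigma)=\End_{\int_\Sigma\cat{A}}(\cat{O}_\Sigma)$ as in~\eqref{eqndefskeinalg}, the action on $\widehat{\cat{A}}(H)$ is, by definition, the one obtained by applying the functor $\PhiA(H):\int_\Sigma\cat{A}\to\vect$ to endomorphisms of $\cat{O}_\Sigma$ and using the identification $\PhiA(H)(\cat{O}_\Sigma)\cong\widehat{\cat{A}}(H)$ of~\cite[Theorem~4.2]{brochierwoike}. On the other side, under the equivalence~\eqref{eqnequivfhskein} of~\cite[Theorem~3.10]{brownhaioun} the object $\cat{O}_\Sigma$ corresponds to a distinguished object $\mathbf{1}\in\SkCatA(\Sigma)$ with $\SkA(H;\mathbf{1})\cong\skA(H)$ and $\End_{\SkCatA(\Sigma)}(\mathbf{1})\cong\SkAlg_\cat{A}(\Sigma)$, and the action on $\skA(H)$ is the one obtained by applying $\SkA(H;-):\SkCatA(\Sigma)\to\vect$ to these endomorphisms; by~\cite[Remark~2.19]{brownhaioun} this agrees with the extension of the a priori non-unital skein action of~\cite[Proposition~2.5]{asm}. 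So both actions are of the form ``apply a functor to the endomorphism algebra of a distinguished object''.

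The punchline is then immediate from Theorem~\ref{thmgenskeinmodule}, which states precisely that $\PhiA(H)$ and $\SkA(H;-)$ agree, under~\eqref{eqnequivfhskein}, as functors $\int_\Sigma\cat{A}\simeq\SkCatA(\Sigma)\to\vect$ (even as module maps over $\int_{\partial\Sigma\times[0,1]}\cat{A}$, which for our closed $\Sigma$ is $\vect$ and carries no content). Evaluating the corresponding natural isomorphism on $\cat{O}_\Sigma\simeq\mathbf{1}$ gives the isomorphism $\widehat{\cat{A}}(H)\cong\SkA(H)$ found above, and because it is the value of a natural transformation of functors it automatically commutes with the actions of $\End(\cat{O}_\Sigma)=\End(\mathbf{1})=\SkAlg_\cat{A}(\Sigma)$. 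Hence it is an isomorphism of skein modules (and, from Theorem~\ref{thmahatasm}, still $\Map(H)$-equivariant).

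I do not expect a genuine obstacle here; the content is already in Theorems~\ref{thmahatasm} and~\ref{thmgenskeinmodule}, and the remaining work is bookkeeping. The one place that needs care is pinning down the identification of $\cat{O}_\Sigma$ with the distinguished object $\mathbf{1}$ of $\SkCatA(\Sigma)$ under~\eqref{eqnequivfhskein} and verifying that, through it, the endomorphism algebra $\End_{\int_\Sigma\cat{A}}(\cat{O}_\Sigma)=\SkAlg_\cat{A}(\Sigma)$ is carried to the endomorphism algebra through which $\SkA(H;-)$ acts on $\skA(H)$, together with the reconciliation with the~\cite{asm} action recorded in~\cite[Remark~2.19]{brownhaioun}; once this is in place, functoriality of the isomorphism supplied by Theorem~\ref{thmgenskeinmodule} finishes the argument.
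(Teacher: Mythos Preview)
Your proposal is correct and follows essentially the same route as the paper: the paragraph preceding the corollary sets up the two $\SkAlg_\cat{A}(\Sigma)$-actions exactly as you describe (via $\PhiA(H)$ applied to $\End(\cat{O}_\Sigma)$ on one side, and via the extension of the~\cite{asm} action through~\cite[Remark~2.19]{brownhaioun} on the other), and then the corollary is recorded as an immediate consequence of Theorem~\ref{thmgenskeinmodule}. Your write-up simply spells out the functoriality argument that the paper leaves implicit.
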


For future reference, we record the following statement:

\begin{corollary}[Excision for admissible skein modules in terms of the generalized skein modules $\PhiA$]\label{corexcisionphi}
	Let $\cat{A}$ be a unimodular finite ribbon category, $M$ a closed oriented three-dimensional manifold and 
	$M=H' \cup_\Sigma H$ a Heegaard splitting for $M$, with gluing along a surface $\partial \bar{H'} = \Sigma = \partial H$.
	Then the admissible skein module for $M$
	(both the uncompleted and completed version)
	can be obtained as a coend over  the generalized skein modules $\PhiA(H)$ and $\PhiA(H')$, i.e.\ there is a canonical isomorphism
	\begin{align}
		\int^{P \in \Proj \int_\Sigma \cat{A}} \PhiA(H';P^\vee)\otimes \PhiA(H;P) \ra{\cong}
		\skA(M) \ . 
	\end{align}
\end{corollary}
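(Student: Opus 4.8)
The plan is to deduce the statement from the excision theorem (Theorem~\ref{Thmexcision}), which is already available in purely skein-theoretic terms, together with the comparison of the generalized handlebody skein modules with the admissible ones on handlebodies (Theorem~\ref{thmgenskeinmodule}); the latter is the only ingredient that uses unimodularity.

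First I would apply Theorem~\ref{Thmexcision} to the disjoint union $M_0 := H \sqcup H'$, whose boundary is $\partial H \sqcup \partial H' = \Sigma \sqcup \bar\Sigma$, the two copies of the Heegaard surface carrying opposite orientations. Gluing $M_0$ along these two copies reproduces the closed manifold $M = H' \cup_\Sigma H$, and since $M$ has empty open and closed boundary the boundary label $X$ is empty. Excision then gives a canonical isomorphism $\int^{P \in \skcatA(\Sigma)} \skA(M_0; P, P^\vee) \ra{\cong} \skA(M)$. Using the monoidality of admissible skein modules (Remark~\ref{remasmprop}~\ref{remasmpropiv}), together with the identification $\skcatA(\bar\Sigma) \simeq \skcatA(\Sigma)^\opp$ of Remark~\ref{remskeincat} to absorb the orientation reversal of the $H'$-copy of $\Sigma$ and the dualization of its boundary label, the left-hand side becomes $\int^{P \in \skcatA(\Sigma)} \skA(H'; P^\vee) \otimes \skA(H; P)$.

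Next I would rewrite the indexing category of this coend. Under the equivalence~\eqref{eqnequivfhskein}, $\int_\Sigma \cat{A} \simeq \SkCatA(\Sigma)$, and the subcategory $\Proj \int_\Sigma \cat{A}$ of projective objects corresponds to the retracts of the representables, i.e.\ to $\skcatA(\Sigma)$ up to idempotent completion. Since a coend is unchanged when the indexing category is replaced by its idempotent (Cauchy) completion, the coend above may equivalently be taken over $\Proj \int_\Sigma \cat{A}$. Finally I would invoke Theorem~\ref{thmgenskeinmodule} in the case $n = 0$: because $\cat{A}$ is unimodular, $\alpha \cong I$, so $\widehat{\cat{A}} = \widehat{\cat{A}}_{\alpha^{-1}}$ and the comparison of Theorem~\ref{thmahatasm} applies without a shift. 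Theorem~\ref{thmgenskeinmodule} then identifies $\PhiA(H) : \int_\Sigma \cat{A} \to \vect$ with $\SkA(H;-) : \SkCatA(\Sigma) \to \vect$ (the $\int_{\partial\Sigma\times[0,1]}\cat{A}$-module structure being trivial since $\Sigma$ is closed), and on projectives $\SkA(H; P)$ reduces to $\skA(H; P)$; hence $\PhiA(H; P) \cong \skA(H; P)$ naturally in $P \in \Proj \int_\Sigma \cat{A}$, and likewise $\PhiA(H'; P^\vee) \cong \skA(H'; P^\vee)$. Substituting these into the coend yields the asserted canonical isomorphism $\int^{P \in \Proj \int_\Sigma \cat{A}} \PhiA(H'; P^\vee) \otimes \PhiA(H; P) \ra{\cong} \skA(M)$; as $M$ is closed with no embedded disks, no cocompletion of $\skA(M)$ is involved and the completed and uncompleted versions coincide.

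The step I expect to be the main obstacle is not a conceptual one --- the hard inputs (Theorems~\ref{thmahatasm} and~\ref{thmgenskeinmodule}) are already in hand --- but the bookkeeping required to make the final isomorphism genuinely canonical: one has to check that the gluing-of-ribbon-graphs structure maps underlying the excision coend are compatible, under the equivalence of~\cite{brownhaioun}, with the dinaturality built into $\PhiA$, and to keep careful track of the orientation reversal $\bar\Sigma$ and of the dualization $P \mapsto P^\vee$ on boundary labels across the Heegaard splitting. One could alternatively bypass the idempotent-completion remark by using right exactness of $\PhiA(H)$ and $\PhiA(H')$ to collapse a coend over all of $\int_\Sigma \cat{A}$ to one over $\Proj \int_\Sigma \cat{A}$; the identification with $\skA(M)$ itself should in any case be routed through Theorem~\ref{Thmexcision} as above.
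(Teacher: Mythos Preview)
Your proposal is correct and follows essentially the same route as the paper's proof: both combine Theorem~\ref{thmgenskeinmodule} (to pass between $\PhiA$ and $\SkA$ under the equivalence $\int_\Sigma\cat{A}\simeq\SkCatA(\Sigma)$), the identification of $\Proj\SkCatA(\Sigma)$ with (the idempotent completion of) $\skcatA(\Sigma)$ via the free finite cocompletion, and Theorem~\ref{Thmexcision}. The paper applies these three ingredients in the opposite order and more tersely, but the argument is the same.
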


\begin{proof}
	Theorem~\ref{thmgenskeinmodule} gives us
	\begin{align}
		\int^{P \in \Proj \int_\Sigma \cat{A}} \PhiA(H';P^\vee)\otimes \PhiA(H;P)
		&\cong \int^{P \in \Proj \SkCatA(\Sigma)} \SkA(H';P^\vee)\otimes\SkA(H;P) \ . 
	\end{align}
	By the definition of the free finite cocompletion \begin{align}\int^{P \in \Proj \SkCatA(\Sigma)} \SkA(H';P^\vee)\otimes\SkA(H;P)\cong \int^{P \in  \skcatA(\Sigma)} \skA(H';P^\vee)\otimes\skA(H;P) \ . \end{align}
	Now Theorem~\ref{Thmexcision} implies the assertion.
\end{proof}

\begin{remark}[Two-dimensional conformal field theory and three-dimensional skein theory]
	In~\cite[Remark~9.13]{microcosm} a correspondence between two-dimensional full conformal field theory in genus zero and three-dimensional skein theory for handlebodies is deduced from the modular microcosm principle.
	In that context, the monodromy data of the conformal field theory is given by a ribbon Grothendieck-Verdier category, and the skein theory has to be defined entirely in terms of factorization homology.
	If $\cat{A}$ is a unimodular finite ribbon category, Theorem~\ref{thmahatasm} tells us that there is a skein description in a more classical sense based on a graphical calculus. This will be exploited elsewhere.
\end{remark}

\spaceplease
\small

\vspace*{1cm}
\noindent \textsc{Perimeter Institute,  N2L 2Y5 Waterloo, Canada} \\[2ex]
\noindent \textsc{Institut de Mathématiques de Bourgogne, UMR 5584, CNRS \& Université de Bourgogne, F-21000 Dijon, France}

\end{document}